\newtheorem{theorem}{Theorem}
\newtheorem{proposition}{Proposition}
\newtheorem{corollary}{Corollary}
\newcommand*{\C}{\mathbb{C}}
\newcommand*{\R}{\mathbb{R}}
\newcommand*{\Q}{\mathbb{Q}}
\newcommand*{\Z}{\mathbb{Z}}
\newcommand*{\N}{\mathbb{N}}
\newcommand{\comment}[1]{}
\title[On variants of Chebyshev's conjecture]%
     {On variants of Chebyshev's conjecture} 
\author[M. Suzuki]{Masatoshi Suzuki}
\subjclass[]{
11M26 
11N05 
}
\keywords{
Chebyshev's bias, 
Riemann Hypothesis, 
Generalized Riemann Hypothesis, 
screw functions
}
\begin{abstract}
We show that the sign constancy for the values of certain weighted summatory functions 
of the von Mangoldt function implies the Riemann hypothesis or 
the generalized Riemann hypothesis for Dirichlet $L$-functions. 
While such sign constancy is challenging to establish individually, 
we prove that the summatory functions under study 
have constant signs on average.
\end{abstract}
\begin{document}

\section{Introduction} 

\subsection{Results on the Riemann zeta function} 

Let $\pi(x)$ be the number of prime numbers $\leq x$. 
According to the prime number theorem, 
the leading term in the asymptotic behavior of $\pi(x)$ as $x \to \infty$ 
is the logarithmic integral ${\rm li}(x)=\int_{0}^{x}dt/\log t$. 
The difference $\pi(x) - {\rm li}(x)$ had long been conjectured to be negative for all $x>2$.  
This conjecture was disproven in 1914 by Littlewood, 
who showed that it changes sign infinitely often \cite{HL16}. 
However, Rubinstein and Sarnak \cite{LS94} proved 
that the logarithmic density of $x$ for which $\pi(x) - {\rm li}(x)$ is negative  
is close to one 
assuming the Riemann Hypothesis (RH) holds -- 
that is, all nontrivial zeros of $\zeta(s)$ have real part $1/2$ -- 
 and the set of positive imaginary parts of the nontrivial zeros of the Riemann zeta function $\zeta(s)$ 
is linearly independent over $\Q$. 
Furthermore, 
as mentioned in Pintz \cite{Pi91} and as proved by Johnston \cite{Jo23}, 
the RH is equivalent to the negativity 
\begin{equation} \label{EQ_101}
\int_2^x (\pi(t)-{\rm li}(t))\,dt <0 \quad (x>2).
\end{equation}
Johnston also shows that the RH is equivalent to the negativity 
\begin{equation} \label{EQ_102}
\int_2^x (\vartheta(t)-t)\,dt <0 \quad (x>2),
\end{equation}
where $\vartheta(x)=\sum_{p \leq x}\log p$, the first Chebyshev function. Here and 
in what follows, the letter $p$ will always denote a prime number. 
In contrast, no such bias is observed in the difference between 
the second Chebyshev function 
\[
\psi(x):=\sideset{}{'}\sum_{n \leq x}\Lambda(n) 
\]
and its leading term $x$ as $x \to \infty$, where $\Lambda(n)$ is the von Mangoldt function defined by 
$\Lambda(n)=\log p$ if $n=p^k$ with $k \in \Z_{>0}$ 
and $\Lambda(n)=0$ otherwise, and $\sideset{}{'}\sum_{n \leq x}a_n$ means 
$\sum_{n \leq x}a_n-(1/2)a_x$ when $x$ is a prime power 
and $\sum_{n \leq x}a_n$ otherwise. 

On the other hand, a negative bias is observed in the difference 
between the weighted Chebyshev function
\begin{equation} \label{EQ_103}
\psi_{1/2}(x):=\sideset{}{'}\sum_{n \leq x} \frac{\Lambda(n)}{\sqrt{n}}
\end{equation}
and its leading term $2\sqrt{x}$ as $x \to \infty$.  
Regarding this observation, using the explicit formula
\begin{equation} \label{EQ_104}
\aligned 
\psi_{1/2}(x) - 2\sqrt{x}
= 
- \sum_{\rho} \frac{x^{\rho-1/2}}{\rho-1/2} 
- \frac{\zeta'}{\zeta}\left(\frac{1}{2}\right)
+o(1)
\endaligned 
\end{equation}
in \eqref{EQ_206} below, 
together with Littlewood's result \cite[Theorem 5.6]{HL16}
\[
\sum_{\rho} \frac{x^{\rho}}{\rho}
= \Omega_{\pm}(\sqrt{x}\log\log\log x), 
\]
we can show that $\psi_{1/2}(x) - 2\sqrt{x}$ changes sign infinitely often 
as well as $\pi(x)-{\rm li}(x)$,  
where the sum $\sum_\rho$ runs over all nontrivial zeros of $\zeta(s)$ counted with multiplicity,  
and is defined as $\sum_\rho = \lim_{T \to \infty}\sum_{|\Im(\rho)| \leq T}$. 
The Landau symbol 
$f(x)=o(g(x))$ means that $|f(x)|/g(x)$ tends to zero as $x \to \infty$. 
Furthermore, 
$f(x)=\Omega_{+}(g(x))$ (resp. $f(x)=\Omega_{-}(g(x))$) means that $\limsup_{x \to \infty} f(x)/g(x)>0$ 
(resp.  $\liminf_{x \to \infty} f(x)/g(x)<0$), 
and $f(x)=\Omega_{\pm}(g(x))$ means that both $f(x)=\Omega_{+}(g(x))$ and $f(x)=\Omega_{-}(g(x))$ hold.  
By the second functional equation 
in \eqref{EQ_204}, 
if $\rho$ is a nontrivial zero of $\zeta(s)$, then so is $\bar{\rho}$. 
Therefore, both $\sum_\rho x^{\rho}/\rho$ and 
$\sum_\rho x^{\rho-1/2}/(\rho-1/2)$ are real valued. 

However, assuming the RH, 
$\psi_{1/2}(x) - 2\sqrt{x}$ is negative on average, 
because 
\[
\int_{0}^{x} (\psi_{1/2}(y) - 2\sqrt{y}\,) \frac{dy}{y}
 = - \frac{\zeta'}{\zeta}\left(\frac{1}{2}\right) \log x 
- \sum_{\rho} \frac{x^{\rho-1/2}}{(\rho-1/2)^2} +o(1) 
\]
holds, $(\zeta'/\zeta)(1/2)=2.68609\dots$ (see \eqref{EQ_301} and \eqref{EQ_302} below), 
and the sum on the right-hand side is bounded under the RH 
by $\sum_\rho |\rho|^{-2}<\infty$ (see \S2.2). 
This bias in the difference $\psi_{1/2}(x) - 2\sqrt{x}$ is also a sufficient condition for the RH 
as in the case of $\pi(x)-{\rm li}(x)$ and $\vartheta(x)-x$:

\begin{theorem} \label{thm_1}
The following three statements are equivalent:
\begin{enumerate}
\item The RH holds. 
\item There exists an $x_0 \geq 2$ such that
\begin{equation} \label{EQ_105}
\aligned 
\int_{0}^{x} & (\psi_{1/2}(y) - 2\sqrt{y}\,) \frac{dy}{y} \\
&=
\sum_{n \leq x} \frac{\Lambda(n)}{\sqrt{n}}\log\frac{x}{n}  - 4 \sqrt{x} ~\leq~  0
\endaligned 
\end{equation}
holds for all $x \geq x_0$. 
\item It holds that 
\begin{equation} \label{EQ_106}
\lim_{x \to \infty}
\left[
\sum_{n \leq x} \frac{\Lambda(n)}{\sqrt{n}}
\left(1-\frac{\log n}{\log x} \right)
- \frac{4\sqrt{x}}{\log x}
\right] = -\frac{\zeta'}{\zeta}\left(\frac{1}{2}\right). 
\end{equation}
\end{enumerate}
\end{theorem}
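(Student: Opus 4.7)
The plan is to establish the cyclic chain (1)$\Rightarrow$(3)$\Rightarrow$(2)$\Rightarrow$(1). First, by Fubini's theorem,
$$\int_0^x (\psi_{1/2}(y)-2\sqrt{y})\,\frac{dy}{y}
=\sum_{n\leq x}\frac{\Lambda(n)}{\sqrt{n}}\log\frac{x}{n}-4\sqrt{x},$$
so the two expressions in (2) are equal; call this common quantity $I(x)$. For (1)$\Rightarrow$(3), I substitute the integrated explicit formula
$$I(x) = -\frac{\zeta'}{\zeta}(1/2)\log x - \sum_{\rho}\frac{x^{\rho-1/2}}{(\rho-1/2)^2} + o(1)$$
displayed in the introduction. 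Under the RH we have $|x^{\rho-1/2}|=1$, so the sum is dominated in absolute value by $\sum_\rho|\rho-1/2|^{-2}<\infty$; dividing by $\log x$ and letting $x\to\infty$ yields (3), since the bracketed expression there is precisely $I(x)/\log x$. The step (3)$\Rightarrow$(2) is immediate: the limit $-(\zeta'/\zeta)(1/2)=-2.686\ldots$ is strictly negative, so $I(x)/\log x$, and hence $I(x)$, is eventually $\leq 0$.

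The crux is (2)$\Rightarrow$(1), which I plan to handle by a Landau-type positivity argument bypassing the explicit formula. For $\mathrm{Re}(s)>1/2$ a direct computation yields
$$\int_1^\infty I(x)\,x^{-s-1}\,dx
= -\frac{1}{s^2}\frac{\zeta'}{\zeta}\!\left(s+\tfrac{1}{2}\right) - \frac{4}{s-1/2},$$
using $\int_n^\infty\log(x/n)\,x^{-s-1}\,dx=1/(s^2 n^s)$ summed against $\Lambda(n)/\sqrt{n}$, together with $\int_1^\infty 4\sqrt{x}\,x^{-s-1}\,dx=4/(s-1/2)$. The two apparent poles at $s=1/2$ cancel thanks to the simple pole of $\zeta$ at $1$, so the right side continues to a meromorphic $H(s)$ on $\mathbb{C}$ whose only singularities in $\{\mathrm{Re}(s)>0\}$ are the points $s=\rho-1/2$ for nontrivial zeros $\rho$ of $\zeta$ with $\mathrm{Re}(\rho)>1/2$; at $s=0$ there is a double pole with leading coefficient $(\zeta'/\zeta)(1/2)\neq 0$.

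Assuming (2), I split the Mellin integral at $x_0$ to write $H(s)=E(s)+G(s)$, where $E$ is entire and $G(s)=\int_{x_0}^\infty(-I(x))x^{-s-1}\,dx$ is the Mellin transform of a nonnegative function. By Landau's theorem the abscissa of convergence $\sigma_c$ of $G$ is a real singular point of its analytic continuation. Because $\zeta(s)<0$ on $(0,1)$ — as follows from $\zeta(s)=\eta(s)/(1-2^{1-s})$ with the Dirichlet $\eta$-function positive on $(0,1)$ — $\zeta$ has no real zero in $(1/2,1)$, so the symmetry $\rho\leftrightarrow\bar\rho$ forces every singularity of $G$ in $\{\mathrm{Re}(s)>0\}$ to be non-real; hence $\sigma_c\leq 0$. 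On the other hand, the double pole of $H$ at $s=0$ is not cancelled by the entire $E$, so $G$ is singular at $s=0$, giving $\sigma_c\geq 0$. Thus $\sigma_c=0$, $G$ is holomorphic in $\{\mathrm{Re}(s)>0\}$, and therefore $\zeta$ has no zero with real part exceeding $1/2$ — the RH.

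The main obstacle is executing the Landau step with care: the termwise Mellin computation must be justified (straightforward since $\Lambda(n)\log(x/n)/\sqrt{n}\geq 0$), the cancellation at $s=1/2$ must be verified, and — most subtly — one must exploit the double pole at $s=0$ with its explicit nonzero coefficient $(\zeta'/\zeta)(1/2)$ to pin $\sigma_c$ precisely at $0$ rather than at some negative trivial-zero singularity. The input ``$\zeta$ has no real zero in $(1/2,1)$'' is indispensable; its analogue for Dirichlet $L$-functions will presumably be the corresponding difficulty in the generalized version treated later in the paper.
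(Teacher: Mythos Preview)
Your proof is correct and follows essentially the same route as the paper: the explicit formula \eqref{EQ_301} for (1)$\Rightarrow$(3), the sign of $(\zeta'/\zeta)(1/2)$ for (3)$\Rightarrow$(2), and the Mellin identity combined with Landau's theorem (the paper's Proposition~\ref{prop_1}) for (2)$\Rightarrow$(1); your integral formula is exactly \eqref{EQ_304} after the shift $s\mapsto s+1/2$. One remark: the step you single out as ``most subtle'' --- using the double pole at $s=0$ to pin $\sigma_c$ \emph{exactly} at $0$ --- is unnecessary, since $\sigma_c\le 0$ alone already gives holomorphy of $G$ in $\{\Re(s)>0\}$ and hence the RH; the paper accordingly omits this step entirely.
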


The prime on the sum is unnecessary on the left-hand side of \eqref{EQ_105}, 
since $\log(x/n)$ vanishes when $n=x$. 
Taking the average 
\[
\frac{1}{x}\int_{1}^{x} \frac{1}{2}\sqrt{\frac{x}{u}} \log \sqrt{\frac{x}{u}} \, du = 
1 - \frac{1}{2\sqrt{x}}(\log x+2)
\]
into account, the inequality in \eqref{EQ_105} 
can be rewritten as an inequality for the sum of $\Lambda(n)$ 
with a weight that is approximately one on average:
\[
\sum_{n \leq x}\Lambda(n)\cdot \frac{1}{2}\sqrt{\frac{x}{n}}\log\sqrt{\frac{x}{n}}  - x~\leq~  0. 
\]
This representation is sometimes used for its apparent simplicity as in Theorem \ref{thm_4} below. 

Considering \cite{LS94}, 
we are interested in the logarithmic density of $x$ 
for which $\psi_{1/2}(x)-2\sqrt{x} \leq 0$ holds, 
but we do not study that topic in this paper. 
\medskip

The equivalence between the first and second statements in Theorem~\ref{thm_1} 
is quite similar to that between the RH and either \eqref{EQ_101} or \eqref{EQ_102}. 
However, an analog of \eqref{EQ_106} for $\pi(x) - \mathrm{li}(x)$ 
or $\vartheta(x) - x$ does not appear to have been previously noted in the literature. 
Moreover, it should be mentioned that 
Theorem~\ref{thm_1} was not discovered as an analog of \eqref{EQ_101}; 
rather, it arose in connection with the screw functions discussed below.
\medskip


Theorem \ref{thm_1} can be interpreted 
in terms of the screw function $g_\zeta(t)$ for $\zeta(s)$ studied in \cite{Su23}.  
It is a function on $\R$ defined by 
\[
g_\zeta(t) := g_0(t) + g_\infty(t)
\]
with 
\begin{equation} \label{EQ_107}
\aligned 
g_0(t) 
&:= \sum_{n \leq \exp(|t|)} \frac{\Lambda(n)}{\sqrt{n}}(|t|-\log n)-4(e^{t/2} + e^{-t/2}-2) , \\
g_\infty(t) 
& := -
\frac{|t|}{2}\left( \frac{\Gamma'}{\Gamma}\left(\frac{1}{4}\right)-\log \pi \right) 
- \frac{1}{4}\Bigl(\Phi(1,2,1/4)-e^{-|t|/2}\Phi(e^{-2|t|},2,1/4)\Bigr), 
\endaligned 
\end{equation}
where $\Phi(z,s,a) = \sum_{n=0}^{\infty} z^n(n+a)^{-s}$ 
is the Hurwitz--Lerch zeta function for $|z|<1$ and $a \not=0, -1,-2,\cdots$. 
It is known that $-g_\zeta(t) \geq 0$ for all $t \in \R$ 
if and only if the RH holds \cite[Theorem 1.7]{Su23}. 
The nonnegativity comes from the fact that $g_\zeta(t)$ 
is a screw function in the sense of \cite[p. 189]{KrLa77} under the RH. 
From the proof of the integral formula
\[
\int_{0}^{\infty} g_\zeta(t) e^{izt} \, dt = \frac{1}{z^2} \frac{\xi'}{\xi}\left(\frac{1}{2}-iz\right)
\]
in \cite[Section 2.1]{Su23}, 
the function 
$g_0(t)$ corresponds to the non-archimedean factor $s(s-1)\zeta(s)$ 
and $g_\infty(t)$ corresponds to the archimedean factor $\pi^{-s/2}\Gamma(s/2)$ 
of the Riemann xi-function 
\begin{equation} \label{EQ_108}
\xi(s):=\frac{1}{2}\,s(s-1)\pi^{-s/2}\Gamma(s/2)\zeta(s).
\end{equation}
According to the Kre\u{\i}n--Langer criterion \cite[Satz 5.9]{KrLa77}, 
$g_0(t)$ cannot be a screw function even assuming the RH, 
because the logarithmic derivative of $s(s-1)\zeta(s)$ can never belong to the Nevanlinna class. 
However, if we note the relation
\begin{equation} \label{EQ_109} 
g_0(\log x) = \sum_{n \leq x} \frac{\Lambda(n)}{\sqrt{n}}\log\frac{x}{n} -
4\left(\sqrt{x}+\frac{1}{\sqrt{x}}-2\right), 
\end{equation}
then we find that the RH is equivalent to $g_0(t)$ having a constant sign. 

\begin{corollary} \label{cor_1}
Let $g_0(t)$ be the non-archimedean part of $g_\zeta(t)$ defined in \eqref{EQ_107}. 
Then, the RH holds if and only if there exists $t_0 > 0$ such that 
$-g_0(t) \geq 0$ holds for all $t \geq t_0$. 
\end{corollary}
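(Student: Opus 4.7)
The plan is to reduce the corollary directly to Theorem~\ref{thm_1} by means of the identity \eqref{EQ_109}. First I would rewrite \eqref{EQ_109} in the form
\[
g_0(\log x) \;=\; F(x) + h(x),
\]
where
\[
F(x) := \sum_{n\le x}\frac{\Lambda(n)}{\sqrt n}\log\frac{x}{n} - 4\sqrt{x}
\qquad\text{and}\qquad
h(x) := 8 - \frac{4}{\sqrt x}
\]
is the bounded discrepancy between $g_0(\log x)$ and the summatory quantity appearing in \eqref{EQ_105}. Note that $4 \le h(x) \le 8$ for every $x \ge 1$, so $h$ contributes only a bounded error and in particular is bounded away from $0$.

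For the ``only if'' direction, assuming the RH, I would invoke Theorem~\ref{thm_1}(3), which yields $F(x)/\log x \to -(\zeta'/\zeta)(1/2)$ as $x\to\infty$. Since $(\zeta'/\zeta)(1/2)=2.686\ldots>0$, this forces $F(x)\to -\infty$, and adding the bounded $h(x)$ gives $g_0(\log x)\to -\infty$ as well. In particular $-g_0(t)\ge 0$ for all $t\ge t_0$ once $t_0$ is chosen sufficiently large.

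For the ``if'' direction, suppose that $-g_0(t)\ge 0$ on $[t_0,\infty)$ for some $t_0>0$. Then for every $x \ge e^{t_0} > 1$ the identity above gives
\[
F(x) \;=\; g_0(\log x) - h(x) \;\le\; -h(x) \;\le\; -4 \;<\; 0,
\]
so the inequality in \eqref{EQ_105} holds on $[x_0,\infty)$ with $x_0 := \max(e^{t_0},2)$, and Theorem~\ref{thm_1}(2) delivers the RH.

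The only nontrivial input is the quantitative control on $F(x)$ under the RH used in the ``only if'' direction, and this is already contained in Theorem~\ref{thm_1}; note that the weaker qualitative conclusion $F(x)\le 0$ eventually (also supplied by Theorem~\ref{thm_1}(2)) would \emph{not} suffice, since the bounded perturbation $h(x)$ is positive and does not vanish. Accordingly I do not anticipate any substantive obstacle beyond those already handled in the proof of Theorem~\ref{thm_1}; the corollary is a genuine corollary, obtained by packaging \eqref{EQ_109} together with the observation that $h$ is universally bounded.
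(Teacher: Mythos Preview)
Your argument is correct. The ``only if'' direction is essentially the same as the paper's: the paper appeals directly to \eqref{EQ_109} and the explicit formula \eqref{EQ_301}, while you package the same information through Theorem~\ref{thm_1}(3), but the content is identical.

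The ``if'' direction genuinely differs. You exploit the strict positivity $h(x)\ge 4$ to push the inequality $g_0(\log x)\le 0$ down to $F(x)\le -4<0$ and then invoke Theorem~\ref{thm_1}(2). The paper instead establishes a separate Laplace-type identity
\[
\int_{0}^{\infty}(-g_0(t))\,e^{-t(s-1/2)}\,dt=\frac{1}{(s-1/2)^2}\left(\frac{\zeta'}{\zeta}(s)+\frac{1}{s-1}+\frac{1}{s}\right)
\]
and applies Proposition~\ref{prop_1} directly to $-g_0$, mirroring the proof of Theorem~\ref{thm_1} rather than reducing to it. Your route is shorter and makes the corollary a genuine corollary, at the cost of relying on the specific sign and lower bound of $h$; the paper's route is more self-contained and template-like (it would go through unchanged even if the discrepancy $h$ had no definite sign), and it makes transparent that the archimedean correction $4(e^{t/2}+e^{-t/2}-2)$ contributes exactly the two extra pole-cancelling terms $1/(s-1)+1/s$. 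Both are valid; yours is the more economical packaging.
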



Noting \eqref{EQ_104}, 
if we replace $4\sqrt{x}$ in \eqref{EQ_105} with 
$2 \sum_{n \leq x} \Lambda(n)/\sqrt{n}$ 
and express $2$ as $\log\exp(2)$, 
we obtain the following  as an analog of Theorem \ref{thm_1}. 

\begin{theorem} \label{thm_2}
Suppose that there exists an $x_0 \geq 2$ such that
\begin{equation} \label{EQ_110}
\sum_{n \leq xe^2} \frac{\Lambda(n)}{\sqrt{n}}\log\frac{x}{n} ~\leq~  0
\end{equation}
holds for all $x \geq x_0$. 
Then, the RH holds. 
Furthermore, it holds that 
\begin{equation} \label{EQ_111}
\lim_{x \to \infty}
\sum_{n \leq xe^2} \frac{\Lambda(n)}{\sqrt{n}}
\left(1- \frac{\log n}{\log x}\right) = -\frac{\zeta'}{\zeta}\left(\frac{1}{2}\right)
\end{equation}
if and only if the RH holds and 
the estimate 
\begin{equation} \label{EQ_112} 
 \sum_{\rho} \frac{x^{\rho}}{\rho} = o(\sqrt{x}\log x) 
\end{equation}
is valid as $x \to \infty$.  
\end{theorem}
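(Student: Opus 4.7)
The plan is to reduce Theorem \ref{thm_2} to the framework of Theorem \ref{thm_1}. Set $h(x) := \sum_{n \leq xe^2} \Lambda(n)\log(x/n)/\sqrt{n}$ and $X := xe^2$. Using $\log(x/n) = \log(X/n)-2$ and $\sum_{n \leq X}\Lambda(n)\log(X/n)/\sqrt{n} = \int_0^X \psi_{1/2}(y)\,dy/y$, one rewrites
\[
h(x) = \int_{0}^{X}\frac{\psi_{1/2}(y)-2\sqrt{y}}{y}\,dy - 2\bigl(\psi_{1/2}(X) - 2\sqrt{X}\bigr).
\]
Substituting \eqref{EQ_104} and the integrated explicit formula displayed just before Theorem \ref{thm_1}, and absorbing the constant $2(\zeta'/\zeta)(1/2)$ via $\log X = \log x + 2$, yields unconditionally
\[
h(x) = -\frac{\zeta'}{\zeta}\!\left(\frac{1}{2}\right)\log x + 2\sum_{\rho}X^{\rho-1/2}\cdot\frac{\rho-1}{(\rho-1/2)^{2}} + o(1).
\]

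For the first assertion, I would follow a Landau-type argument, paralleling the treatment of Theorem \ref{thm_1}. A termwise computation for $\Re(s) > 1/2$ gives
\[
H(s) := \int_{0}^{\infty}h(x)\,x^{-s-1}\,dx = -\frac{\zeta'}{\zeta}\!\left(s+\frac{1}{2}\right)\cdot\frac{e^{2s}(1-2s)}{s^{2}}.
\]
The real singularities of $-H(s)$ are a double pole at $s=0$ (from $1/s^{2}$) and simple poles at $s = -5/2, -9/2, \ldots$ coming from the trivial zeros of $\zeta$; the pole of $\zeta'/\zeta(s+1/2)$ at $s=1/2$ is cancelled by the factor $(1-2s)$, while the non-trivial zeros of $\zeta$ contribute only non-real poles of $-H(s)$. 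Assuming $h(x) \leq 0$ for $x \geq x_0$, Landau's theorem applied to $\int_{x_0}^{\infty}(-h(x))x^{-s-1}dx$ states that its abscissa of convergence $\sigma_0$ is a real singular point of the meromorphic continuation, which differs from $-H(s)$ only by an entire function. Should RH fail, a zero $\rho_0$ with $\Re(\rho_0)>1/2$ would produce a pole of $-H(s)$ at $s = \rho_0 - 1/2$ in $\Re(s)>0$, forcing $\sigma_0 \geq \Re(\rho_0) - 1/2 > 0$ and contradicting that all real singularities lie in $(-\infty, 0]$.

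For the equivalence, note that $1 - (\log n)/\log x = \log(x/n)/\log x$, so \eqref{EQ_111} reads $h(x)/\log x \to -(\zeta'/\zeta)(1/2)$. Since $(\zeta'/\zeta)(1/2) > 0$, the limit is strictly negative and $h(x) < 0$ holds eventually, whence RH follows from the first assertion. Under RH, writing $\rho = 1/2 + i\gamma$ gives $(\rho-1)/(\rho-1/2)^{2} = 1/(2\gamma^{2}) - i/\gamma$, which splits the zero sum in the asymptotic for $h(x)$ as $(1/2)\sum_{\rho}X^{i\gamma}/\gamma^{2} - i\sum_{\rho}X^{i\gamma}/\gamma$; the first sum is $O(1)$ via $\sum_{\rho}\gamma^{-2}<\infty$. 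Thus \eqref{EQ_111} becomes equivalent under RH to $\sum_{\rho}X^{i\gamma}/\gamma = o(\log X)$. Combining the RH factorisation $\sum_{\rho}x^{\rho}/\rho = \sqrt{x}\sum_{\rho}x^{i\gamma}/(1/2+i\gamma)$ with the elementary expansion $1/(1/2+i\gamma) = 1/(i\gamma) + O(1/\gamma^{2})$ converts this condition into \eqref{EQ_112}. The main technical obstacle is making the Landau argument rigorous: one must control the growth of $h(x)$ (which could reach $x^{\Theta-1/2}$ with $\Theta = \sup_{\rho}\Re(\rho)$ if RH fails), pin down the singularity structure of $-H(s)$ carefully (particularly the double pole at $s=0$ and the cancellation of $(1-2s)$ against the $s=1/2$ pole), and invoke an unconditional form of the integrated explicit formula to validate the asymptotic for $h(x)$ without assuming RH.
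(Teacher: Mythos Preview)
Your proposal is correct and follows essentially the same route as the paper. The paper computes the Mellin transform of $-\sum_{n\le x}\Lambda(n)n^{-1/2}\log(x/(ne^2))$ (its equation (3.6)) and applies the Landau--Widder theorem (Proposition~\ref{prop_1}); your $H(s)$ is the same object after the change of variables $x\mapsto xe^{2}$ and the shift $s\mapsto s+1/2$. For the equivalence, the paper substitutes \eqref{EQ_206} at $s=1/2$ into \eqref{EQ_301} to obtain a formula (its (3.9)) that, after regrouping, coincides with your expression $-(\zeta'/\zeta)(1/2)\log x+2\sum_\rho X^{\rho-1/2}(\rho-1)/(\rho-1/2)^2+O(1)$; the remaining reduction to \eqref{EQ_112} via $1/(\rho-1/2)-1/\rho=O(|\rho|^{-2})$ is identical. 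One small slip: your ``$+\,o(1)$'' should be ``$+\,O(1)$'' (the constant $-(\zeta'/\zeta)'(1/2)$ survives), but this is harmless once you divide by $\log x$. The technical obstacles you flag at the end are not genuine: the termwise computation of $H(s)$ already secures convergence for $\Re(s)>1/2$, and the explicit formulas you invoke (your analogues of \eqref{EQ_301} and \eqref{EQ_206}) hold unconditionally.
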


The right-hand side of \eqref{EQ_111} is negative (see \eqref{EQ_302} below). 
Therefore, \eqref{EQ_111} implies that 
there exists $x_0 \geq 2$ such that \eqref{EQ_110} holds for all $x \geq x_0$, 
\medskip

For the sum on the left-hand side of \eqref{EQ_112}, 
the best known estimate under the RH is
\[
\sum_{\rho} \frac{x^{\rho}}{\rho}
\ll \sqrt{x}(\log x)^2 \quad \text{as $x \to \infty$}
\]
\cite[pp. 82--85]{In90}, 
but it is not enough to prove \eqref{EQ_110} for large $x$, 
where the Vinogradov symbol $f(x) \ll g(x)$ mean that there exists a positive constant $M$ such that 
$|f(x)| \leq M g(x)$ holds for a prescribed range of $x$. 
We use the the Landau symbol $f(x)=O(g(x))$ in the same meaning. 
In the above sense, \eqref{EQ_110} and \eqref{EQ_111}  may provide a condition stronger than the RH. 
For comparison, we note that Montgomery's conjecture 
\[
\limsup_{x \to \infty} \frac{\sum_{\rho} x^{\rho}/\rho}{\sqrt{x}(\log\log\log x )^2}
= \frac{1}{2\pi}
\quad \text{and} \quad 
\liminf_{x \to \infty} \frac{\sum_{\rho} x^{\rho}/\rho}{\sqrt{x}(\log\log\log x )^2}
= -\frac{1}{2\pi}
\]
\cite[p.16]{Mo80} implies the RH and the estimate \eqref{EQ_112}. 
The equivalence of \eqref{EQ_111} and \eqref{EQ_112} 
can be viewed as analogous to Akatsuka \cite[Theorem 1]{Aka17}, 
which in turn parallels Conrad \cite[Theorem 1.3]{Con05} for $\zeta(s)$. 
Taking the logarithm of \cite[Corollary 4.4]{Aka17} 
and using \cite[Lemma 2.1]{Aka17}, 
Akatsuka's result asserts that  
\[
\lim_{x \to \infty}
\left[ \sum_{n \leq x}  \frac{\Lambda(n)}{\sqrt{n}\log n}
-\lim_{\varepsilon \to 0+}\left(\int_{0}^{1-\varepsilon} + \int_{1+\varepsilon}^{\sqrt{x}}\right)
\frac{du}{\log u} \right] 
= \log\left(-\zeta\left(\frac{1}{2}\right)\right)
\]
is equivalent to the validity of the RH and \eqref{EQ_112}, 
where $-\zeta(1/2)=1.46035\cdots$. 
Recently, Akatsuka showed that the equation obtained 
by replacing the upper limit $\sqrt{x}$ of the integral on the left-hand side with 
$\sqrt{\vartheta(x)}$ is equivalent to the RH (\cite[Theorem 3]{Aka24}).
\medskip

A sequence $\{a_n\}$ is called Riesz summable of order $k$ to $a$ if 
\[
\lim_{x \to \infty} \sum_{n \leq x} a_n \left(1-\frac{\log n}{\log x}\right)^k=a 
\]
(cf. \cite[p.158]{MV07}). From \eqref{EQ_301} below, 
$\{\Lambda(n)/\sqrt{n}\}$ cannot be Riesz summable of order one even if the RH is assumed. 
However, Theorem \ref{thm_2} asserts that if we suppose a condition stronger than the RH, 
then a slightly modified Riesz sum of order one has a limit, 
and conversely, the existence of such a limit implies the RH. 
\medskip

If we rewrite \eqref{EQ_110} as  
\begin{equation} \label{EQ_113}
\sum_{n \leq x} \Lambda(n)\sqrt{\frac{x}{n}}\log\frac{x}{n}
\quad \leq~
\sum_{x < n \leq xe^2} \Lambda(n)\sqrt{\frac{x}{n}}\log\frac{n}{x}
\end{equation}
it could be interpreted as a bias depending on the size of the weighted prime powers. 

\subsection{Results related to Chebyshev's conjecture} 

For any pair of integers $q$ and $a$ with $(q,a)=1$, 
let $\pi(x; q, a)$ denote the number of primes $p \leq x$ such that $p \equiv a$ mod $q$. 
The leading term in the asymptotic behavior of $\pi(x; q, a)$
as $x \to \infty$ is ${\rm li}(x)/\varphi(q)$, 
where $\varphi(q)$ is the Euler totient function. 
As in the case of the difference $\pi(x)-{\rm li}(x)$, 
Chebyshev's conjecture that $\pi(x;4,1)-\pi(x;4, 3)$ is always nonpositive
was disproved in \cite{HL16}, 
but it has been shown that the logarithmic density of the set of $x > 2$ 
for which $\pi(x;4,1)-\pi(x; 4, 3)$ is negative is close to one \cite{LS94}. 

Now, considering the similarity between 
$\pi(x)-{\rm li}(x)$ and $\pi(x;4,1)-\pi(x;4, 3)$, 
we introduce the weighted function
\[
\psi_{1/2}(x;q,a):=\sideset{}{'}\sum_{{n \leq x}\atop{n \equiv a\,{\rm mod}\,q}} \frac{\Lambda(n)}{\sqrt{n}} 
\]
for integers $q$ and $a$ with $(q,a)=1$. 
This is a Chebyshev function version of the weighted counting function 
$\pi_{1/2}(x;q,a)$ studied in Aoki and Koyama \cite{AK22}. 
Then, a negative bias is observed in the difference 
\[
\psi_{1/2}(x;4,1)-\psi_{1/2}(x;4,3) 
= \sideset{}{'}\sum_{n \leq x} \frac{\Lambda(n)\chi_4(n)}{\sqrt{n}}, 
\]
where $\chi_4$ is the non-principal Dirichlet character modulo four. 
Furthermore, we find that $\psi_{1/2}(x;4,1)-\psi_{1/2}(x;4,3) $ is negative on average 
if and only if all zeros of the Dirichlet $L$-function $L(s,\chi_4)$ 
in the critical strip $0 \leq \Re(s) \leq 1$ lie on the line $\Re(s)=1/2$. 
The latter is nothing but the Generalized Riemann Hypothesis (GRH) for $L(s,\chi_4)$. 

\begin{theorem} \label{thm_3}
The following three statements are equivalent:
\begin{enumerate}
\item The GRH for $L(s,\chi_4)$ holds. 
\item There exists an $x_0 \geq 2$ such that
\begin{equation} \label{EQ_114}
\aligned 
\int_{0}^{x} 
(\psi_{1/2}(y;4,1)&-\psi_{1/2}(y;4,3)) \frac{dy}{y} \\
& =
\sum_{n \leq x} \frac{\Lambda(n)\chi_4(n)}{\sqrt{n}}\log\frac{x}{n} ~\leq~  0
\endaligned 
\end{equation}
holds for all $x \geq x_0$. 
\item It holds that 
\begin{equation} \label{EQ_115}
\lim_{x \to \infty}
\sum_{n \leq x} \frac{\Lambda(n)\chi_4(n)}{\sqrt{n}}
\left(1- \frac{\log n}{\log x}\right) = -\frac{L'}{L}\left(\frac{1}{2},\chi_4\right). 
\end{equation}
\end{enumerate}
\end{theorem}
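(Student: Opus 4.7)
The approach is to model the proof on that of Theorem~\ref{thm_1}, replacing $\zeta(s)$ by the entire function $L(s,\chi_4)$. Because $\chi_4$ is non-principal, $L(s,\chi_4)$ has no pole, so no main term analogous to $2\sqrt{x}$ appears---matching the shape of \eqref{EQ_114}--\eqref{EQ_115}. First I would establish the explicit formula
\[
\psi_{1/2}(x;4,1)-\psi_{1/2}(x;4,3) = -\sum_{\rho}\frac{x^{\rho-1/2}}{\rho-1/2} - \frac{L'}{L}\left(\frac{1}{2},\chi_4\right) + o(1)
\]
by applying Perron's formula to $-L'(s,\chi_4)/L(s,\chi_4) = \sum_n \Lambda(n)\chi_4(n)n^{-s}$ and reading off residues at the nontrivial zeros $\rho$ of $L(s,\chi_4)$ and at $s=1/2$, with $\sum_\rho := \lim_{T\to\infty}\sum_{|\Im(\rho)|\le T}$. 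Integrating against $dy/y$ from $0$ to $x$, and using the elementary identity displayed in \eqref{EQ_114}, yields
\[
\sum_{n\le x}\frac{\Lambda(n)\chi_4(n)}{\sqrt n}\log\frac{x}{n} = -\frac{L'}{L}\left(\frac{1}{2},\chi_4\right)\log x - \sum_{\rho}\frac{x^{\rho-1/2}}{(\rho-1/2)^{2}} + o(1).
\]

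For the directions (1)$\Rightarrow$(2) and (1)$\Rightarrow$(3), assume the GRH for $L(s,\chi_4)$. Then each $\rho-1/2=i\gamma_\rho$ is purely imaginary, $|x^{\rho-1/2}|=1$, and the double-pole zero-sum converges absolutely by $\sum_\rho|\gamma_\rho|^{-2}<\infty$, hence is bounded uniformly in $x$. A sign verification analogous to the one the paper carries out for $\zeta'/\zeta(1/2)$ gives $L'(1/2,\chi_4)/L(1/2,\chi_4)>0$, so the main term $-L'/L(1/2,\chi_4)\log x$ dominates with the correct sign, giving \eqref{EQ_114} for all sufficiently large $x$ and, after division by $\log x$, the limit \eqref{EQ_115}.

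The converses (2)$\Rightarrow$(1) and (3)$\Rightarrow$(1) carry the main weight of the theorem. For (3)$\Rightarrow$(1) I would argue by contradiction: a putative zero $\rho_0$ with $\sigma_0:=\Re(\rho_0)>1/2$ would, by a standard oscillation argument, force $\sum_\rho x^{\rho-1/2}/(\rho-1/2)^2 = \Omega_\pm(x^{\sigma_0-1/2})$, so after division by $\log x$ the expression on the left of \eqref{EQ_115} oscillates unboundedly and no finite limit can exist. For (2)$\Rightarrow$(1) I would invoke a Landau-type theorem: setting $F(x):=\sum_{n\le x}\Lambda(n)\chi_4(n) n^{-1/2}\log(x/n)$, the hypothesis $F(x)\le 0$ for $x\ge x_0$ means that $-F(e^u)\ge 0$ for $u\ge \log x_0$, and its Laplace transform coincides---up to an explicit entire correction---with $-s^{-2}\,L'(s+1/2,\chi_4)/L(s+1/2,\chi_4)$. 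By Landau's lemma the Laplace transform of a nonnegative function is analytic along the real axis up to its abscissa of convergence; but a zero of $L(s,\chi_4)$ with $\Re(\rho)>1/2$, paired with its complex conjugate via the functional equation, would produce a real singularity strictly to the right of that abscissa, contradicting this analyticity and forcing every nontrivial zero onto the critical line.

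The main obstacle I anticipate is the clean implementation of the Landau step: the Laplace transform of $-F(e^u)$ is not a Dirichlet series with nonnegative coefficients in the classical sense, so one must first subtract an explicit analytic remainder---coming from the trivial zeros of $L(s,\chi_4)$ at $s=-1,-3,\dots$ and from the entire factors of the Mellin identity---before Landau's theorem applies, which in turn requires controlled bounds on $L'/L$ along vertical lines. The numerical verification that $L'(1/2,\chi_4)/L(1/2,\chi_4)>0$ is routine from the Hadamard factorization of $L(s,\chi_4)$ and can be executed in parallel with the corresponding computation for $\zeta'/\zeta(1/2)$.
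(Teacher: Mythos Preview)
Your outline for $(1)\Rightarrow(2)$ and $(1)\Rightarrow(3)$ matches the paper's route via the explicit formula \eqref{EQ_405}, and your $(3)\Rightarrow(1)$ oscillation argument would succeed, though the paper does something slicker: once \eqref{EQ_115} holds, $f_{\chi_4}(x)\ll\log x$, so the Mellin integral \eqref{EQ_404} converges absolutely in $\Re(s)>1/2$ and GRH falls out directly, with no $\Omega_\pm$ machinery.

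The real issue is your $(2)\Rightarrow(1)$ step. Landau's lemma (Proposition~\ref{prop_1}) tells you that the \emph{real} point on the abscissa of convergence is a singularity; it says nothing about non-real zeros. Your claim that a zero $\rho_0$ off the real line, together with its conjugate $\bar\rho_0$, ``would produce a real singularity'' of $(L'/L)(s,\chi_4)$ is simply false: they give two complex poles at $\rho_0,\bar\rho_0$, not a real one. What the argument actually needs is the input that $L(\sigma,\chi_4)\neq 0$ for every \emph{real} $\sigma>1/2$. The paper supplies this cheaply from the alternating series $L(\sigma,\chi_4)=\sum_{n\ge 0}(-1)^n(2n+1)^{-\sigma}>0$ for $\sigma>0$. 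With that in hand, the right side of the exact identity
\[
\int_{1}^{\infty}(-f_{\chi_4}(x))\,x^{-s+1/2}\,\frac{dx}{x}
=\frac{1}{(s-1/2)^{2}}\,\frac{L'}{L}(s,\chi_4)
\]
has no real pole on $(1/2,\infty)$; Landau then forces the abscissa to be $\le 1/2$, so the integral converges and defines an analytic function in $\Re(s)>1/2$, whence $L'/L$ is pole-free there.

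Your anticipated ``obstacle'' about subtracting contributions from the trivial zeros is a red herring: the Mellin identity above is exact for $\Re(s)>1$ straight from \eqref{EQ_209}, with no remainder terms, and the trivial zeros of $L(s,\chi_4)$ sit at $s=-1,-3,\dots$, well to the left of the region in play.
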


Unlike \cite[Theorem 1.3]{Con05}, \cite[Theorem 1]{Aka17}, 
and Theorem \ref{thm_2},  
which include claims stronger than the GRH, 
it should be noted that \eqref{EQ_115} is equivalent to the GRH 
for $L(s, \chi_4)$. 
On the other hand, similar to Theorem \ref{thm_1}, 
the nonpositivity in \eqref{EQ_114} can be interpreted as a property of 
the non-archimedean part of the screw function 
associated with $L(s,\chi_4)$ studied in \cite{Su22}, 
but we omit the details, as they may be easily inferred. 
\medskip

Based on his observation of the negative bias in 
the difference $\pi(x;4,1)-\pi(x;4,3)$, 
Chebyshev conjectured 
\begin{equation} \label{EQ_116}
\lim_{x\to\infty} \sum_{p>2} (-1)^{(p-1)/2} \cdot e^{-p/x}=-\infty.
\end{equation}
The sum on the left-hand side of \eqref{EQ_116} can be written as 
\[
\sum_{p \equiv 1\,{\rm mod}\, 4} e^{-p/x} \quad 
 -  
\sum_{p \equiv 3\,{\rm mod}\, 4} e^{-p/x}. 
\]
Here $e^{-p/x} \to 1$ as $x \to \infty$ termwise. 
Therefore, Chebyshev's conjecture \eqref{EQ_116} 
suggests that there are ``more'' primes of the form 
$p=4n+3$ than those of the form $p=4n+1$. 
Subsequently, Hardy and Littlewood~\cite{HL16} and Landau~\cite{La18} 
proved that \eqref{EQ_116} and 
\begin{equation} \label{EQ_117}
\lim_{x\to\infty} \sum_{p>2} (-1)^{(p-1)/2} \log p \cdot e^{-p/x}=-\infty
\end{equation}
are equivalent to 
the GRH for $L(s, \chi_{4})$. 
Knapowski and Tur\'{a}n \cite{KnTu69} showed that replacing the weight function 
$\exp(-p/x)$ in \eqref{EQ_116} and \eqref{EQ_117} 
with $\exp(-(\log(p/x))^2)$ 
is likewise equivalent to the GRH for $L(s, \chi_{4})$. 
Fujii~\cite{Fu88} proved that when $\exp(-p/x)$ is replaced with $\exp(-(p/x)^\alpha)$ 
(resp. $2K_0(2\sqrt{p/x})$), 
the result in \eqref{EQ_116} (resp. \eqref{EQ_117}) is equivalent to the GRH for $L(s, \chi_{4})$ 
if $0<\alpha<4.19$, and he conjectured that this would hold for general $\alpha>0$ as well.
 Platt and Trudgian~\cite{PT18} extend Fujii's result for \eqref{EQ_116} to $0<\alpha<20.4$. 
In addition, 
a few other weight functions were considered in 
the works of Hardy--Littlewood \cite{HL16} and Fujii~\cite{Fu88}.
Based on the above developments and Theorem \ref{thm_3}, 
considering the weight function $\sqrt{x/p}\log(x/p)\mathbf{1}_{p \leq x}$ 
in \eqref{EQ_117} leads to the following result. 

\begin{theorem} \label{thm_4}
The GRH for $L(s,\chi_{4})$ holds if and only if
\begin{equation} \label{EQ_118}
\lim_{x \to \infty} 
\sum_{2<p \leq x} (-1)^{(p-1)/2} \log p \cdot\sqrt{\frac{x}{p}}\,\log\frac{x}{p} 
= -\infty.
\end{equation}
\end{theorem}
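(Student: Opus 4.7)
The plan is to relate the weighted sum $S(x) := \sum_{2 < p \leq x} \chi_4(p) \log p \cdot \sqrt{x/p}\,\log(x/p)$ appearing in \eqref{EQ_118} to the Riesz-type sum
\[
R(x) := \sum_{n \leq x} \frac{\Lambda(n)\chi_4(n)}{\sqrt{n}} \log\frac{x}{n}
\]
from Theorem \ref{thm_3}, and then invoke that theorem together with a standard omega-type argument. Writing $S(x) = \sqrt{x}\,S_1(x)$ with $S_1(x) := \sum_{p \leq x}\chi_4(p)(\log p)/\sqrt{p}\cdot\log(x/p)$, I note that $S_1(x)$ is precisely the contribution of the primes ($k=1$) to $R(x)$. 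I would then evaluate the prime-power terms $R(x) - S_1(x)$ unconditionally: using the Mertens-type estimates $\sum_{p \leq y}(\log p)/p = \log y + O(1)$ and $\sum_{p \leq y}(\log p)^2/p = \tfrac{1}{2}(\log y)^2 + O(\log y)$, the $k = 2$ contribution (where $\chi_4(p)^2 = 1$ for odd $p$) evaluates to $\tfrac{1}{4}(\log x)^2 + O(\log x)$, while the $k \geq 3$ terms contribute only $O(\log x)$ because $\sum_p (\log p)\,p^{-k/2}$ converges for $k\geq 3$. This yields the key decomposition
\[
S(x) \;=\; \sqrt{x}\,R(x) \;-\; \tfrac{1}{4}\sqrt{x}(\log x)^2 \;+\; O(\sqrt{x}\log x).
\]

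For the forward direction, suppose the GRH for $L(s,\chi_4)$ holds. Theorem \ref{thm_3}(3) then gives $R(x)/\log x \to -(L'/L)(1/2,\chi_4)$, so in particular $R(x) = O(\log x)$. Substituting into the decomposition above yields $S(x) = -\tfrac{1}{4}\sqrt{x}(\log x)^2 + O(\sqrt{x}\log x) \to -\infty$.

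For the converse I would proceed by contrapositive. Suppose the GRH fails and let $\sigma_0 > 1/2$ denote the supremum of the real parts of the nontrivial zeros of $L(s,\chi_4)$. The Mellin identity
\[
\int_1^\infty R(x)\,x^{-s-1}\,dx \;=\; -\frac{1}{s^2}\frac{L'}{L}\!\left(s + \tfrac{1}{2},\chi_4\right) \qquad (\Re(s) > 1/2)
\]
follows from interchanging the order of summation and integration. Applying Landau's oscillation theorem to $\pm R(x)$, after subtracting the $\log x$ main term and the explicit archimedean pieces so that a one-sided bound of the form $\pm R(x) \leq C\,x^{\sigma_0 - 1/2 - \varepsilon}$ would force the right-hand side to extend analytically past its singularity at $s = \rho_0 - 1/2$, I would conclude $R(x) = \Omega_\pm(x^{\sigma_0 - 1/2 - \varepsilon})$ for every $\varepsilon > 0$. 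For $\varepsilon < \sigma_0 - 1/2$ we have $\sqrt{x}(\log x)^2 = o(x^{\sigma_0 - \varepsilon})$, so the decomposition gives $S(x) = \Omega_\pm(x^{\sigma_0 - \varepsilon})$; in particular $S(x) > 0$ for arbitrarily large $x$, contradicting the hypothesis $S(x) \to -\infty$.

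The main technical obstacle will be the rigorous application of Landau's theorem in this non-sign-definite setting; an alternative that I would probably prefer is to appeal directly to the explicit formula for $R(x)$ derived in the proof of Theorem \ref{thm_3}, which expresses $R(x) + (L'/L)(1/2,\chi_4)\log x$ as $-\sum_\rho x^{\rho - 1/2}/(\rho - 1/2)^2$ up to $O(1)$, making the oscillation magnitude $\asymp x^{\sigma_0 - 1/2}$ transparent. The prime-power accounting in the first step and the forward implication are both routine consequences of the machinery already developed for Theorem \ref{thm_3}.
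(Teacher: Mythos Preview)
Your decomposition $S(x)=\sqrt{x}\,R(x)-\tfrac14\sqrt{x}(\log x)^2+O(\sqrt{x}\log x)$ and the forward implication are exactly what the paper does: split $R(x)=f_\chi(x)$ into the $k=1$, $k=2$, and $k\geq 3$ pieces, evaluate the latter two via Mertens, and use $R(x)=O(\log x)$ under GRH (from \eqref{EQ_405}) to get $S(x)\sim -\tfrac14\sqrt{x}(\log x)^2$.

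For the converse the paper takes a shorter path than your contrapositive. Rather than deriving an $\Omega_\pm$ oscillation result for $R(x)$, it simply observes that $S(x)\to -\infty$ forces $S_1(x)=S(x)/\sqrt{x}\leq 0$ eventually, so $-S_1$ is nonnegative and Proposition~\ref{prop_1} applies \emph{directly to $S_1$}: writing
\[
-\int_1^\infty S_1(x)\,x^{-s+1/2}\,\frac{dx}{x}
=\frac{1}{(s-1/2)^2}\frac{L'}{L}(s,\chi_4)+\int_1^\infty f_2\,x^{-s+1/2}\,\frac{dx}{x}+\int_1^\infty f_3\,x^{-s+1/2}\,\frac{dx}{x},
\]
the $f_2,f_3$ integrals are analytic in $\Re(s)>1/2$ by your own estimates, and since $L(\sigma,\chi_4)\neq 0$ for real $\sigma>0$ (alternating series), the right side has no real singularity in $(1/2,\infty)$. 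Landau then pushes the abscissa to $\leq 1/2$, giving GRH at once. Your route via $R(x)=\Omega_\pm(x^{\sigma_0-1/2-\varepsilon})$ also works, but it needs the same real-nonvanishing input (which you do not state) to locate the real singularity in the Landau argument for $g(x)=Cx^{\sigma_0-1/2-\varepsilon}-R(x)$; your phrasing ``a one-sided bound \dots\ would force the right-hand side to extend analytically'' is not literally correct without first passing to such a nonnegative auxiliary function. The explicit-formula alternative you mention is harder to make rigorous, since isolating one zero of maximal real part from the conditionally convergent sum $\sum_\rho x^{\rho-1/2}/(\rho-1/2)^2$ requires additional work.
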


As a result comparable to Theorem \ref{thm_4}, 
we note that the asymptotic formula 
\begin{equation} \label{EQ_119}
\sum_{2<p \leq x} (-1)^{(p-1)/2}  \sqrt{\frac{x}{p}}~ \sim ~ \frac{1}{2}\sqrt{x}\log \log x
\quad \text{as $x \to \infty$}
\end{equation}
holds 
if and only if the GRH for $L(s,\chi_4)$ holds and 
\eqref{EQ_112} is satisfied for the nontrivial zeros of $L(s,\chi_4)$ 
by combining \cite[Theorem 1.1, (1.3)]{AK22} and \cite[Theorem 6.2]{Con05}. 
According to the proof of Theorem \ref{thm_4}, 
the GRH for $L(s,\chi_{4})$ is equivalent to the left-hand side 
of \eqref{EQ_118} being asymptotically equal to $-(\sqrt{x}/4)(\log x)^2$; 
thus, unlike \eqref{EQ_119}, 
it does not yield a result stronger than the GRH. 
However, an analog of Theorem \ref{thm_4} for $\zeta(s)$ 
yields a conclusion stronger than the RH, similar to Theorem \ref{thm_2}.

\begin{theorem} \label{thm_5}
The RH holds assuming 
\begin{equation} \label{EQ_120}
\lim_{x \to \infty} 
\sum_{p \leq xe^2} \log p \cdot \sqrt{\frac{x}{p}}\,\log\frac{x}{p} 
= -\infty. 
\end{equation}
Conversely, if the RH holds and \eqref{EQ_112} is valid as $x \to \infty$, 
then \eqref{EQ_120} holds. 
\end{theorem}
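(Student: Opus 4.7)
The plan is to decompose the sum in \eqref{EQ_120} into the weighted sum $T(x)$ of Theorem~\ref{thm_2} plus an explicit prime-power remainder, apply Theorem~\ref{thm_2} for sufficiency, and use a Landau $\Omega_+$-argument for the converse. With $X = xe^2$, set
\[
T(x) := \sum_{n \leq X}\frac{\Lambda(n)}{\sqrt{n}}\log\frac{x}{n}, \qquad P_{\geq 2}(x) := \sum_{k \geq 2}\sum_{p^k \leq X}\frac{\log p}{p^{k/2}}\log\frac{x}{p^k},
\]
so that the sum on the left of \eqref{EQ_120} equals $\sqrt{x}\,(T(x) - P_{\geq 2}(x))$. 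The $k \geq 3$ contributions to $P_{\geq 2}(x)$ are $O(\log x)$ (from absolute convergence of $\sum_p (\log p)/p^{k/2}$ and $|\log(x/p^k)| \ll \log x$), and the $k = 2$ part, expanded as $\log(x/p^2) = \log x - 2\log p$ and evaluated by the Mertens estimates $\sum_{p \leq y}(\log p)/p = \log y + O(1)$ and $\sum_{p \leq y}(\log p)^2/p = \tfrac{1}{2}(\log y)^2 + O(1)$ at $y = e\sqrt{x}$, gives unconditionally
\[
P_{\geq 2}(x) = \frac{(\log x)^2}{4} + O(\log x).
\]

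For sufficiency, assume the RH and \eqref{EQ_112}. Theorem~\ref{thm_2} then yields $T(x) = -(\zeta'/\zeta)(1/2)\log x + o(\log x)$, so $\sqrt{x}\,(T(x) - P_{\geq 2}(x)) \sim -\tfrac{1}{4}\sqrt{x}(\log x)^2 \to -\infty$, which is \eqref{EQ_120}.

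For necessity, the explicit-formula computation behind Theorem~\ref{thm_2}, combining \eqref{EQ_104} for $\psi_{1/2}(X)$ with its Abel-integrated form $\int_0^X (\psi_{1/2}(y) - 2\sqrt{y})\,dy/y$, gives
\[
T(x) = -\frac{\zeta'}{\zeta}\!\left(\frac{1}{2}\right)\log x + R(x) + o(1), \quad R(x) := 2\sum_\rho \frac{X^{\rho-1/2}}{\rho-1/2} - \sum_\rho \frac{X^{\rho-1/2}}{(\rho-1/2)^2}.
\]
Assume \eqref{EQ_120} and, for contradiction, suppose the RH fails; set $\Theta := \sup\{\Re(\rho)\} > 1/2$. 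The hypothesis \eqref{EQ_120} forces $T(x) - P_{\geq 2}(x) \leq 0$ eventually, so $R(x) \leq \tfrac{1}{4}(\log x)^2 + O(\log x)$. On the other hand, $R(x)$ arises from the Perron-type transform $(-\zeta'/\zeta)(s + 1/2)\bigl[\,2/s - 1/s^2\,\bigr]$, whose right-most poles lie at $s = \rho_0 - 1/2$ for zeros $\rho_0$ with $\Re(\rho_0) = \Theta$; the residue at $\rho_0 - 1/2$ is proportional to $2/(\rho_0 - 1/2) - 1/(\rho_0 - 1/2)^2$, which vanishes only when $\rho_0 = 1$ and hence never for a nontrivial zero. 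Landau's $\Omega_+$-theorem then gives $R(x) = \Omega_+(x^{\Theta - 1/2 - \delta})$ for every $\delta > 0$, growing faster than $(\log x)^2$ and contradicting the upper bound just obtained. Therefore the RH must hold.

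The main obstacle lies in the $\Omega_+$-analysis of $R(x)$: one must rigorously verify nonvanishing of the residue at the right-most zero, so that the two pieces of $R$ cannot accidentally cancel, and then apply Landau's theorem to a linear combination of two zero-sums that are not individually absolutely convergent. Once the identity $\sqrt{x}\,(T(x) - P_{\geq 2}(x))$ is in hand, the prime-power bookkeeping for $P_{\geq 2}$ and the sufficiency deduction from Theorem~\ref{thm_2} are essentially routine.
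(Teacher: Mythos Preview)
Your decomposition into $T(x)-P_{\geq 2}(x)$ and the evaluation $P_{\geq 2}(x)=\tfrac14(\log x)^2+O(\log x)$ via Mertens are exactly what the paper does (after the substitution $x\mapsto x/e^{2}$), and your deduction of \eqref{EQ_120} from RH $+$ \eqref{EQ_112} via Theorem~\ref{thm_2} is correct and matches the paper. So the ``converse'' direction is fine.

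For the implication \eqref{EQ_120} $\Rightarrow$ RH, however, you take a detour that the paper avoids. The paper, following the template of Theorem~\ref{thm_4}, never introduces the explicit-formula remainder $R(x)$ at all: it simply observes that the Mellin transform of minus the prime-only sum equals $-\dfrac{2(s-1)}{(s-1/2)^2}\dfrac{\zeta'}{\zeta}(s)$ plus a function analytic in $\Re(s)>1/2$ (the latter coming from $P_{\geq 2}$), notes that this has no real singularity on $(1/2,\infty)$ because $\zeta(\sigma)\neq 0$ there and the pole at $s=1$ is cancelled by the factor $s-1$, and applies Proposition~\ref{prop_1} directly. One step, no zero-sums.

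Your route instead bounds $R(x)\leq \tfrac14(\log x)^2+O(\log x)$ and seeks a contradiction from $R(x)=\Omega_{+}(x^{\Theta-1/2-\delta})$. The residue computation is right, but the appeal to ``Landau's $\Omega_{+}$-theorem'' is where the argument is soft: $R(x)$ is a conditionally convergent zero-sum, not a Dirichlet series or a Mellin integral with an evident abscissa of convergence, so the standard $\Omega_{\pm}$ machinery does not apply to it off the shelf. To make the step rigorous you would set $g(x)=C(\log x)^2-R(x)\geq 0$, compute its Mellin transform by going back through $T(x)$ and \eqref{EQ_306}, and then invoke Proposition~\ref{prop_1}. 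But at that point you have reconstructed the paper's argument with extra scaffolding. There is no genuine error, but the $\Omega_{+}$ passage as written is a gap, and closing it collapses your proof into the paper's.
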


\subsection{Results on averages over Dirichlet characters} 

As naturally expected, Theorem \ref{thm_3} generalizes to a result 
for general Dirichlet $L$-functions, as stated in Theorem \ref{thm_8} below.  
This generalization is also described by inequalities for oscillating sums.  
By considering the averages of these sums, we obtain the following result.  
As in~\eqref{EQ_113}, this average can be rewritten as the difference of two sums, 
each having a constant sign in the first and second halves.  
For the following results, for an imprimitive Dirichlet character $\chi$, 
we define the GRH to hold for $L(s,\chi)$ as the GRH to hold for $L(s,\chi^\ast)$, 
where $\chi^\ast$ is a primitive Dirichlet character inducing $\chi$. 

\begin{theorem} \label{thm_6}
Let $q$ be a positive integer, 
and let $\beta=\beta(q)$ be a real number such that 
$1/2 \leq \beta <1$ and 
$L(\sigma,\chi)\neq 0$ for all $\sigma >\beta$ 
and all Dirichlet character $\chi$ modulo $q$.  
\begin{enumerate} 
\item Suppose that there exists $x_0 \geq 2$ such that either
\begin{equation} \label{eq_0613_2}
\sum_{{n \leq x}\atop{n \equiv 1 \,{\rm mod}\, q}} 
\frac{\Lambda(n)}{\sqrt{n}} \log\frac{x}{n}
- \frac{4\sqrt{x}}{\varphi(q)}  
\end{equation}
has a constant sign for all $x \geq x_0$, or 
\begin{equation} \label{EQ_122}
\sum_{{n \leq xe^2}\atop{n \equiv 1 \,{\rm mod}\, q}} \frac{\Lambda(n)}{\sqrt{n}}\log\frac{x}{n} 
\end{equation}
has a constant sign for all $x \geq x_0$. 
Then $L(s,\chi) \not=0$ in the right half-plane $\Re(s)>\beta$ for all $\chi$ modulo $q$. 
In particular, if $\beta=1/2$, the GRH for $L(s,\chi)$ holds for all 
Dirichlet characters $\chi$ modulo $q$. 
\item 
Suppose that $L(1/2,\chi)\not=0$ for all Dirichlet character $\chi$ modulo $q$.   
Then, we have
\begin{equation} \label{EQ_123}
\aligned 
\lim_{x \to \infty} &
\left(
\sum_{{n \leq x}\atop{n \equiv 1 \,{\rm mod}\, q}} 
\frac{\Lambda(n)}{\sqrt{n}}
\left(1-\frac{\log n}{\log x} \right)
- \frac{4\sqrt{x}}{\varphi(q)\log x}
\right) \\ 
& \qquad \qquad \qquad \qquad \qquad \quad 
=  -\frac{1}{\varphi(q)} \sum_{\chi\,{\rm mod}\, q} \frac{L'}{L}\left(\frac{1}{2},\chi\right)
\endaligned 
\end{equation}
if and only if the GRH  for $L(s,\chi)$ holds for all Dirichlet characters $\chi$ modulo $q$.  
\item Suppose that $L(1/2,\chi)\not=0$ for all Dirichlet character $\chi$ modulo $q$.   
Then, we have
\begin{equation} \label{EQ_124} 
\lim_{x \to \infty}
\sum_{{n \leq xe^2}\atop{n \equiv 1 \,{\rm mod}\, q}}  \frac{\Lambda(n)}{\sqrt{n}}
\left(1- \frac{\log n}{\log x}\right) = -\frac{1}{\varphi(q)} 
\sum_{\chi\,{\rm mod}\,q}\frac{L'}{L}\left(\frac{1}{2},\chi\right)
\end{equation}
if and only if the GRH for $L(s, \chi)$ holds for all Dirichlet characters $\chi$ modulo $q$   
and
\begin{equation} \label{EQ_125} 
\sum_{\chi\,{\rm mod}\, q} \sum_{\rho_{\chi^\ast}} \frac{x^{\rho_{\chi^\ast}}}{\rho_{\chi^\ast}} 
= o(\sqrt{x} \log x)
\end{equation}
holds as $x \to \infty$ for nontrivial zeros of $L(s, \chi^\ast)$ 
for all $\chi$ modulo $q$, 
where $\chi^\ast$ is a primitive Dirichlet character that induces \( \chi \).
As well as the sum $\sum_\rho$ above, 
the sum $\sum_{\rho_{\chi^\ast}}$ runs over all nontrivial zeros of $L(s,\chi^\ast)$ counted with multiplicity, 
and is defined as $\sum_{\rho_{\chi^\ast}} = \lim_{T \to \infty}\sum_{|\Im(\rho_{\chi^\ast})| \leq T}$.  
\item Let $m_\chi$ denote the order of the zero of $L(s,\chi)$ at $s = 1/2$. 
Then, we have
\begin{equation} \label{eq_0612_6}
\aligned 
\lim_{x \to \infty} & 
\left(\frac{1}{\log x}
\sum_{{n \leq x}\atop{n \equiv 1 \,{\rm mod}\, q}} 
\frac{\Lambda(n)}{\sqrt{n}}
\left(1-\frac{\log n}{\log x} \right)
- \frac{4\sqrt{x}}{\varphi(q)(\log x)^2}
\right) \\ 
& \qquad \qquad \qquad \qquad \qquad \quad 
=  -\frac{1}{2}\sum_{\chi\,{\rm mod}\,q} m_\chi
\endaligned 
\end{equation}
if and only if the GRH  for $L(s,\chi)$ holds for all Dirichlet characters $\chi$ modulo $q$.  
\item  Under the notation of the third and fourth statements, we have
\begin{equation} \label{eq_0612_7}
\lim_{x \to \infty} \,  \frac{1}{\log x}
\sum_{{n \leq xe^2}\atop{n \equiv 1 \,{\rm mod}\, q}}  \frac{\Lambda(n)}{\sqrt{n}}
\left(1- \frac{\log n}{\log x}\right) = -\frac{1}{2}\sum_{\chi\,{\rm mod}\,q} m_\chi
\end{equation}
if and only if the GRH for $L(s, \chi)$ holds for all Dirichlet characters $\chi$ modulo $q$   
and 
\begin{equation} \label{eq_0613_3} 
\sum_{\chi\,{\rm mod}\, q} \sum_{\rho_{\chi^\ast}} \frac{x^{\rho_{\chi^\ast}}}{\rho_{\chi^\ast}} 
= o(\sqrt{x} (\log x)^2)
\end{equation}
holds as $x \to \infty$ for nontrivial zeros of $L(s, \chi^\ast)$ 
for all $\chi$ modulo $q$. 
\end{enumerate}
\end{theorem}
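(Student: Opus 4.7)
The plan is to mimic the proofs of statements (3) and (4) by character orthogonality and Perron-type explicit formulas, accommodating the possibility that $L(1/2,\chi)$ vanishes -- in which case $-(L'/L)(s+1/2,\chi)$ itself has a simple pole at $s = 0$.

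First, writing $\log(x/n) = \log(xe^2/n) - 2$, I decompose
\[
\sum_{n \leq xe^2}\frac{\Lambda(n)\chi(n)}{\sqrt{n}}\log\frac{x}{n} = U_\chi(xe^2) - 2\,\psi_{1/2}(xe^2;\chi),
\]
where $U_\chi(y) := \sum_{n \leq y}\Lambda(n)\chi(n) n^{-1/2}\log(y/n)$. By orthogonality, the left-hand side of \eqref{eq_0612_7} equals $\frac{1}{\varphi(q)(\log x)^2}\sum_{\chi\bmod q}(U_\chi(xe^2) - 2\psi_{1/2}(xe^2;\chi))$, and each summand has a Perron representation with respective kernels $-(L'/L)(s+1/2,\chi)\cdot y^s/s^2$ and $-(L'/L)(s+1/2,\chi)\cdot y^s/s$. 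Shifting both contours left of $\Re s = 0$, I collect residues at three locations: (i) $s = 1/2$, only when $\chi = \chi_0$, contributing $4\sqrt{y}$ to $U_{\chi_0}$ and $2\sqrt{y}$ to $\psi_{1/2}(\cdot;\chi_0)$, which cancel in $U_{\chi_0} - 2\psi_{1/2}(\cdot;\chi_0)$; (ii) $s = 0$, a triple pole when $m_\chi \geq 1$, yielding residue $-\tfrac{m_\chi}{2}(\log y)^2 + O(\log y)$ for $U_\chi$ and $-m_\chi \log y + O(1)$ for $\psi_{1/2}(\cdot;\chi)$; (iii) $s = \rho - 1/2$ for each nontrivial zero $\rho \neq 1/2$ of $L(s,\chi^\ast)$, giving the standard zero sums $\sum_\rho y^{\rho-1/2}/(\rho-1/2)^2$ and $\sum_\rho y^{\rho-1/2}/(\rho-1/2)$.

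Setting $y = xe^2$ and using $\log y = \log x + O(1)$, the $(\log x)^2$-coefficient of $U_\chi(xe^2) - 2\psi_{1/2}(xe^2;\chi)$ is $-\tfrac{m_\chi}{2}$. Under GRH, the $U_\chi$-type zero sum is absolutely bounded (since $\sum |\gamma|^{-2} < \infty$); under \eqref{eq_0613_3}, combined with the identity $\sum_\rho y^\rho/\rho = \sqrt{y}\sum_\rho y^{\rho-1/2}/(\rho-1/2) + O(\sqrt{y})$ valid under GRH, the $\psi_{1/2}$-type zero sum is $o((\log x)^2)$. Summing over $\chi$ and dividing by $(\log x)^2$ produces \eqref{eq_0612_7}. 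For the converse, starting from \eqref{eq_0612_7} and the same explicit formula, the aggregate zero-sum contribution $\sum_\chi \sum_\rho (xe^2)^{\rho-1/2}/(\rho-1/2)$ must be $o((\log x)^2)$. A Landau-type argument on the paired zeros $\rho,\overline\rho$ rules out any zero with $\Re\rho > 1/2$ (such a zero would force a persistent $x^{\Re\rho - 1/2}$ oscillation), giving GRH; the $o((\log x)^2)$ bound then transfers through the above identity to give \eqref{eq_0613_3}.

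The main obstacle is the converse direction: extracting both GRH and the precise $o(\sqrt{x}(\log x)^2)$ bound from a single real-valued asymptotic. While a hypothetical zero with $\Re\rho > 1/2$ would produce a contribution of order $x^{\Re\rho - 1/2}$ far exceeding $(\log x)^2$, cancellations among conjugate zeros and across different $\chi$ must be ruled out; the standard route invokes the oscillatory character of $\sum_{|\gamma|\leq T} y^{i\gamma}/\gamma$ together with the observation that the sum over $\chi \bmod q$ in \eqref{eq_0613_3} retains enough phase information to detect any stray zero. The forward-direction residue computation at $s = 0$ for $m_\chi \geq 1$ -- a triple pole whose residue must be extracted from the Laurent expansion of $-(L'/L)(s+1/2,\chi)\cdot y^s/s^2$ -- is routine but requires care in tracking the $A_\chi\log y$ lower-order term and the imprimitive corrections coming from the factor $\prod_{p\mid q,\,p\nmid q^\ast}(1 - \chi^\ast(p)p^{-s})$ relating $L(s,\chi)$ to $L(s,\chi^\ast)$.
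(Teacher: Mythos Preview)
Your forward direction (GRH $+$ \eqref{eq_0613_3} $\Rightarrow$ \eqref{eq_0612_7}) is essentially the paper's argument: both compute explicit formulas for $U_\chi$ and $\psi_{1/2}(\cdot;\chi)$ allowing for a zero at $s=1/2$ of order $m_\chi$, sum over characters, and read off the leading $(\log x)^2$ coefficient.

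The gap is in your converse. You propose to deduce GRH from \eqref{eq_0612_7} by observing, via the explicit formula, that the aggregate zero sum
\[
\sum_{\chi\bmod q}\ \sum_{\rho_{\chi^\ast}\neq 1/2}\frac{(xe^2)^{\rho_{\chi^\ast}-1/2}}{\rho_{\chi^\ast}-1/2}
\]
must be $o((\log x)^2)$, and then to invoke a ``Landau-type argument on paired zeros'' to rule out any $\rho$ with $\Re\rho>1/2$. This step is not justified: the zeros of $L(s,\chi)$ for complex $\chi$ are not symmetric about the real axis (they pair with zeros of $L(s,\bar\chi)$), the zero sums are only conditionally convergent, and you yourself flag that cancellation across different $\chi$ must be excluded. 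Your appeal to ``the oscillatory character of $\sum_{|\gamma|\leq T}y^{i\gamma}/\gamma$'' does not by itself furnish a proof, and the paragraph labeled ``main obstacle'' is an accurate self-diagnosis rather than a resolution.

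The paper sidesteps this entirely. Writing $F_q(x)=\sum_{n\leq xe^2,\,n\equiv 1\,(q)}\Lambda(n)n^{-1/2}\log(x/n)$, one has the Mellin identity
\[
\int_1^\infty(-F_q(x))\,x^{-s+1/2}\,\frac{dx}{x}
=\frac{2(1-s)}{(s-1/2)^2}\cdot\frac{1}{\varphi(q)}\sum_{\chi\bmod q}\frac{L'}{L}(s,\chi)
\qquad(\Re s>1).
\]
Assumption \eqref{eq_0612_7} gives $F_q(x)\ll(\log x)^2$, so the left side converges absolutely, hence is analytic, throughout $\Re s>1/2$. Therefore $\sum_{\chi}(L'/L)(s,\chi)$ is analytic in $\Re s>1/2$ apart from the pole of the $\chi_0$ term at $s=1$, which is killed by the factor $1-s$. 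Now the key point: at any zero of any $L(s,\chi)$ the residue of $(L'/L)(s,\chi)$ is a \emph{positive integer}, so poles cannot cancel across the sum over $\chi$. Hence every $L(s,\chi)$ is zero-free in $\Re s>1/2$, which is GRH. Once GRH is in hand, substituting back into the explicit formula for $F_q$ and comparing with \eqref{eq_0612_7} yields \eqref{eq_0613_3} as you indicate. The Mellin-integral route thus replaces your incomplete oscillation argument with a two-line analyticity-plus-positivity observation.
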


If $L(1/2,\chi)\not=0$ for all Dirichlet character $\chi$ modulo $q$, then the right-hand sides of \eqref{eq_0612_6} and \eqref{eq_0612_7} vanish. 
Hence, the second and fourth claims of Theorem~\ref{thm_6},  
as well as the third and fifth claims, are consistent. 
\medskip

As for \eqref{EQ_110}, nonpositivity of \eqref{EQ_122} 
can be expressed as an inequality for nonnegative sums 
\[
\sum_{{n \leq x}\atop{n \equiv 1 \,{\rm mod}\, q}} 
\Lambda(n)\sqrt{\frac{x}{n}}\log\frac{x}{n}
\quad \leq~
\sum_{{x<n \leq xe^2}\atop{n \equiv 1 \,{\rm mod}\, q}} 
\Lambda(n)\sqrt{\frac{x}{n}}\log\frac{n}{x}. 
\]
The previous \eqref{EQ_113} corresponds to the case $q=1$ of this inequality.

According to the second and third statements of Theorem~6, the sign of the sums \eqref{eq_0613_2} and \eqref{EQ_122} for sufficiently large \( x \), which are assumed in the first statement, 
should be determined by the constant
\[
C(q) := \sum_{\chi\,{\rm mod}\, q} \frac{L'}{L}\left(\frac{1}{2},\chi\right) \in \mathbb{R}.
\]
For simplicity, let $q$ be an odd prime. 
Then, taking into account that there are $(q-1)/2-1$ even primitive Dirichlet characters 
and $(q-1)/2$ odd primitive Dirichlet characters, we obtain the formula
\[
C(q) 
= \frac{q-1}{2}(\log 8\pi + C_0) - \frac{q-2}{2}\log q
+ \frac{\log q}{\sqrt{q}-1}
\]
from \eqref{EQ_302}, \eqref{EQ_406}, and \eqref{eq_0613_1} below, 
where $C_0$ is the Euler--Mascheroni constant.  
It follows that $C(q) < 0$ for all odd primes $\leq 47$, 
while $C(q) > 0$ for all primes $\geq 53$.
 
The sign constancy of \eqref{eq_0613_2} and \eqref{EQ_122} is conditional. 
However, we find that they are unconditionally negative on average for all sufficiently large $x>1$.

\begin{theorem} \label{thm_7} We have 
\begin{equation} \label{EQ_126}
\aligned 
\sum_{3 \leq q \leq Q} &  \varphi(q) \left(
\sum_{{n \leq x}\atop{n \equiv 1 \,{\rm mod}\, q}}  \frac{\Lambda(n)}{\sqrt{n}}\log\frac{x}{n} 
- \frac{4 \sqrt{x}}{\varphi(q)}\right) \\
& \qquad \qquad \qquad \qquad = 4\sqrt{x}\, \left[ \frac{1}{9}x\,(1+o(1)) -  Q \right] \quad \text{as $x \to \infty$}
\endaligned 
\end{equation}
and
\begin{equation} \label{EQ_127}
\sum_{3 \leq q \leq Q} \varphi(q)
\sum_{{n \leq xe^2}\atop{n \equiv 1 \,{\rm mod}\, q}} 
\frac{\Lambda(n)}{\sqrt{n}}\log\frac{x}{n} 
= - \frac{8}{9}\,e^{3}\,x\sqrt{x}\,(1+o(1)) \quad \text{as $x \to \infty$}
\end{equation}
if $Q \geq x$. 
In particular, the left-hand sides are negative for all sufficiently large $x>1$. 
\end{theorem}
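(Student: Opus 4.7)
First I would apply orthogonality of Dirichlet characters modulo $q$: since $\Lambda(n) > 0$ forces $n$ to be a prime power and the condition $n \equiv 1 \pmod q$ then automatically gives $(n,q)=1$, the inner double sum over $\chi$ and $n$ in both \eqref{EQ_126} and \eqref{EQ_127} collapses to $\varphi(q)\sum_{n \leq N,\, n \equiv 1\,(q)}\Lambda(n)n^{-1/2}\log(x/n)$, with $N = x$ or $N = xe^2$. Swapping the order of summation so that $n$ runs first and $q$ runs over divisors of $n-1$ in $[3,Q]$, and applying the classical identity $\sum_{d \mid m}\varphi(d) = m$ after removing the contributions from $q \in \{1,2\}$, the inner sum over $q$ equals $n - 2 - \mathbf{1}_{n \text{ odd}}$ whenever $Q \geq n-1$. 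The hypothesis $Q \geq x$ makes this valid throughout $n \leq x$; for $x < n \leq xe^2$ in \eqref{EQ_127} the same identity applies provided $Q$ is large enough to capture every divisor of $n-1$.

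Next the problem reduces to $\sum_{n \leq N}\Lambda(n)\sqrt{n}\log(x/n)$ up to corrections from the $-2$ and $-\mathbf{1}_{n\text{ odd}}$ pieces, which are $O(\sqrt{x})$ by Mertens-type estimates applied to $\sum\Lambda(n)n^{-1/2}\log(x/n)$ on the range $n \leq xe^2$. I would evaluate this main sum via the Stieltjes integral $\int_1^N \sqrt{t}\log(x/t)\,d\psi(t)$, integrating by parts and invoking the prime number theorem $\psi(t) = t + o(t)$. The change of variables $u = t/x$ then reduces everything to the elementary integral $x^{3/2}\int_0^a u^{1/2}(-\log u)\,du$, which equals $\tfrac{4}{9}$ at $a=1$ and $-\tfrac{8}{9}e^3$ at $a=e^2$. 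Hence $\sum_{n \leq x}\Lambda(n)\sqrt{n}\log(x/n) \sim \tfrac{4}{9}x^{3/2}$ and $\sum_{n \leq xe^2}\Lambda(n)\sqrt{n}\log(x/n) \sim -\tfrac{8}{9}e^3 x^{3/2}$.

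For \eqref{EQ_126}, combining with the constant contribution $-4\sqrt{x}(Q-2)$ (coming from the $-4\sqrt{x}$ inside the sum over $q \in [3,Q]$) gives $\tfrac{4}{9}x^{3/2}(1+o(1)) - 4\sqrt{x}(Q-2) = 4\sqrt{x}[\tfrac{1}{9}x(1+o(1)) - Q]$, after absorbing the leftover $+8\sqrt{x}$ into the $o(1)$-term multiplied by $x/9$. For \eqref{EQ_127} the stated asymptotic $-\tfrac{8}{9}e^3 x\sqrt{x}(1+o(1))$ follows directly. The negativity claim is then immediate: $Q \geq x$ forces $\tfrac{1}{9}x - Q \leq -\tfrac{8}{9}x < 0$ in \eqref{EQ_126}, and \eqref{EQ_127} is manifestly negative. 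The main obstacle I anticipate lies in controlling the truncation $q \leq Q$ in \eqref{EQ_127} on the range $x < n \leq xe^2$: here $n - 1 < e^2 Q$, so any excluded divisor $q > Q$ has complementary divisor $(n-1)/q < e^2 < 8$, giving at most seven excluded divisors per $n$. Bounding this finite correction so that it does not disturb the main $-\tfrac{8}{9}e^3 x^{3/2}$ asymptotic is the most delicate step, and is the only place where the strength of the hypothesis $Q \geq x$ (as opposed to $Q \geq xe^2$) really enters.
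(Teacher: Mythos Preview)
Your overall plan coincides with the paper's: orthogonality collapses the character sum to $\varphi(q)\sum_{n\equiv 1(q)}$, swapping sums and invoking $\sum_{d\mid m}\varphi(d)=m$ replaces the inner sum by $n-1$ (minus the $q=1,2$ corrections), and partial summation with the prime number theorem yields the constants $4/9$ and $-8/9\,e^{3}$. The one structural difference is in \eqref{EQ_127}. The paper does \emph{not} work directly with $n\le xe^{2}$; instead it evaluates $\sum_{3\le q\le Q}\varphi(q)F_q(x)$ with $F_q(x)=\sum_{n\le x,\,n\equiv1(q)}\Lambda(n)n^{-1/2}\log(x/(ne^{2}))$, still under $Q\ge x$ so that every divisor of $n-1$ is captured, obtains $-(8/9)x^{3/2}(1+o(1))$, and only then substitutes $xe^{2}$ for $x$. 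This sidesteps the truncation problem you describe (at the implicit cost that after substitution the hypothesis effectively reads $Q\ge xe^{2}$).

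Your proposed direct handling of that truncation under the bare hypothesis $Q\ge x$ does not go through as written. The observation that each $n\in(x,xe^{2}]$ has at most seven excluded divisors $q>Q$ of $n-1$ is correct, but those divisors are $q=(n-1)/d$ with $d\le 7$, so $\varphi(q)$ is of order $n$, not $O(1)$. The omitted contribution is then of size
\[
\sum_{x<n\le xe^{2}}\frac{\Lambda(n)}{\sqrt{n}}\,\Bigl|\log\frac{x}{n}\Bigr|\cdot O(n)\ \ll\ \sum_{x<n\le xe^{2}}\Lambda(n)\sqrt{n}\ \ll\ x^{3/2},
\]
the same order as the main term $-(8/9)e^{3}x^{3/2}$, and cannot be absorbed into the $o(1)$ by a mere upper bound; you would need an honest asymptotic for sums like $\sum_{x<p\le xe^{2}}\varphi(p-1)\sqrt{p}\log(p/x)$, which lies well beyond the elementary tools in play. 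Adopting the paper's substitution device avoids this issue entirely.
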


The reason $q=1,2$ are excluded in both \eqref{EQ_126} and \eqref{EQ_127} is that, 
in those cases, the Dirichlet characters modulo $q$ are only the principal character.
While it is certainly desirable to obtain the negativity 
of the left-hand side of \eqref{EQ_126} or \eqref{EQ_127} 
for a large constant $Q \geq 3$, 
we are also interested in what nontrivial conclusions can be derived 
when the negativity of the left-hand side of \eqref{EQ_126} or \eqref{EQ_127} 
is obtained under a condition weaker than $Q \geq x$ 
with respect to $Q$. This remains a subject for future study. 
\medskip

This paper is organized as follows. 
In Section \ref{section_2}, we review the basic properties of 
the Riemann zeta function and Dirichlet $L$-functions. 
In Section \ref{section_3}, we prove 
the results for the Riemann zeta function:   
Theorems \ref{thm_1} and \ref{thm_2} 
and Corollary \ref{cor_1}. 
The key ingredients for the proof of Theorem \ref{thm_1} are 
an integral representation for a function involving 
the logarithmic derivative of the Riemann zeta function and 
a well-known result concerning the Laplace transform of a nonnegative valued function. 
Except for technical details, all results in this paper are shown 
using roughly the same approach as Theorem \ref{thm_1}.
In Section \ref{section_4}, we prove 
the results for Dirichlet $L$-functions: 
Theorems \ref{thm_3} and \ref{thm_4}. 
Although Theorem \ref{thm_5} is a result for the Riemann zeta function, 
it follows a similar way to that of Theorem \ref{thm_4}, 
so we include its proof in this section.
Finally, in Section \ref{section_5}, we prove Theorems \ref{thm_6} and \ref{thm_7} 
by combining the discussions in Sections \ref{section_3} and \ref{section_4}.
\medskip

\noindent
{\bf Acknowledgments}~
The author appreciates the referee's valuable comments, 
which led to the corrections of Theorems 6, 8, and 9,  
and contributed to improving the presentation of the paper. 
This work was supported by JSPS KAKENHI  Grant Number JP23K03050. 

\section{Preliminaries} \label{section_2}

In this section, we review a few elementary facts in number theory and 
the basic properties of the Riemann zeta function 
and Dirichlet $L$-functions referencing Titchmarsh \cite{Tit86} 
and Montgomery and Vaughan \cite{MV07}. 
In addition, we review a property of the Mellin transforms of nonnegative functions.

\subsection{Facts on prime numbers} 
We use Mertens' theorem \cite[Theorem 2.7]{MV07} in the form
\begin{equation} \label{EQ_201}
\sum_{p \leq x} \frac{\log p}{p} 
= \log x + O(1) \quad \text{as $x \to \infty$},  
\end{equation}
and the classical prime number theorem \cite[Theorem 6.9]{MV07} 
\begin{equation} \label{EQ_202}
\psi(x)-x= \sum_{\rho} \frac{x^{\rho}}{\rho} 
= O(x\exp(-c\sqrt{\log x}))\quad \text{as $x \to \infty$}, 
\end{equation}
where $\sum_\rho = \lim_{T \to \infty}\sum_{|\Im(\rho)| \leq T}$ as before. 

\subsection{Riemann zeta function} 
The Riemann zeta function $\zeta(s)$ 
is defined by the absolutely convergent series 
$\sum_{n=1}^{\infty}n^{-s}$ if $\Re(s)>1$, 
and extends to a meromorphic function on $\C$, 
which is analytic except for a simple pole at $s=1$ with residue one. 
Due to the Euler product formula 
$\zeta(s)=\prod_p (1-p^{-s})^{-1}$ \cite[(1.1.2)]{Tit86}, 
the logarithmic derivative of $\zeta(s)$ 
has the Dirichlet series 
\begin{equation} \label{EQ_203}
-\frac{\zeta'}{\zeta}(s) = \sum_{n=1}^{\infty} \frac{\Lambda(n)}{n^s}
\end{equation}
for $\Re(s)>1$ \cite[(1.1.8)]{Tit86}. 
The Riemann xi-function \eqref{EQ_108} is an entire function satisfying the functional equations 
\begin{equation} \label{EQ_204}
\xi(s)=\xi(1-s) \quad \text{and} \quad \xi(s)=\overline{\xi(\bar{s})}
\end{equation}
by \cite[(2.6.4)]{Tit86} and $\zeta(\sigma) \in \R$ for $\sigma>1$. 
Zeros of $\zeta(s)$ in the critical strip $0 \leq \Re(s) \leq 1$ are called nontrivial zeros 
and coincide with zeros of $\xi(s)$. 
Because $\xi(s)$ is an entire function of order one \cite[Theorem 2.12]{Tit86}, 
\begin{equation} \label{EQ_205}
\sum_\rho  |\rho|^{-1-\delta} < \infty \quad \text{if $\delta>0$},
\end{equation}
where $\rho$ runs over all nontrivial zeros of $\zeta(s)$ counted with multiplicity. 
We often use the explicit formula 
\begin{equation}\label{EQ_206}
\aligned 
\sideset{}{'}\sum_{n \leq x}\frac{\Lambda(n)}{n^s}
& = \frac{x^{1-s}}{1-s}
- \frac{\zeta^\prime}{\zeta}(s)
-  \sum_{\rho} \frac{x^{\rho-s}}{\rho-s}
+ \sum_{k=1}^\infty \frac{x^{-2k-s}}{2k+s}
\endaligned 
\end{equation}
for $x>1$, $s\not=1, \rho, -2k$ ($k \in \Z_{>0}$) 
\cite[Exercises 12.1.1, 4]{MV07}. 

\subsection{Dirichlet $L$-function} 

For a Dirichlet character $\chi$ modulo $q>1$, 
the Dirichlet $L$-function $L(s, \chi)$ 
is defined by the absolutely convergent series 
$\sum_{n=1}^{\infty}\chi(n)n^{-s}$ if $\Re(s)>1$. 
If $\chi$ is a principal character $\chi_0$, 
then $L(s, \chi_0)$ extends to a meromorphic function on $\C$, 
which is analytic except for a simple pole at $s=1$. 
If $\chi$ is not a principal character, 
then $L(s, \chi)$ extends to an entire function and
\begin{equation} \label{EQ_207}
L(1,\chi) \not=0
\end{equation}
\cite[Theorem 4.9]{MV07}. 
Due to the Euler product formula 
\begin{equation} \label{EQ_208}
L(s,\chi) = \prod_p (1-\chi(p)p^{-s})^{-1} \quad 
\end{equation}
for $\Re(s)>1$ \cite[(4.21)]{MV07},   
the logarithmic derivative of $L(s,\chi)$ 
has the absolutely convergent Dirichlet series 
\begin{equation} \label{EQ_209}
-\frac{L'}{L}(s,\chi) = \sum_{n=1}^{\infty} \frac{\Lambda(n)\chi(n)}{n^s}
\end{equation}
for $\Re(s)>1$ \cite[(4.25)]{MV07}. 

For a primitive Dirichlet character $\chi$ modulo $q>1$, 
we set
\[
\kappa
:=\kappa(\chi)
=
\begin{cases}
~0 & \text{if $\chi(-1)=1$}, \\
~1 & \text{if $\chi(-1)=-1$},
\end{cases}
\]
and 
\begin{equation} \label{EQ_210}
\xi(s,\chi) := \left(\frac{q}{\pi}\right)^{(s+\kappa)/2}\Gamma\left(\frac{s+\kappa}{2}\right) L(s,\chi).
\end{equation}
Then, it is an entire function satisfying the functional equation 
\begin{equation} \label{EQ_211}
\xi(s,\chi)=\varepsilon(\chi)\xi(1-s,\overline{\chi}), \quad 
\varepsilon(\chi) := \frac{\tau(\chi)}{i^\kappa\sqrt{q}}
\end{equation}
\cite[(10.17) and Corollary 10.8]{MV07}, 
where $\tau(\chi)$ is the Gauss sum of $\chi$. 
Zeros of $L(s,\chi)$ in the critical strip $0 \leq \Re(s) \leq 1$ 
and $s\not=0$ are called nontrivial zeros 
and coincide with the zeros of $\xi(s,\chi)$. 
The Generalized Riemann Hypothesis (GRH) 
claims that all nontrivial zeros lie on the critical line $\Re(s)=1/2$. 
Because $\xi(s,\chi)$ is an entire function of order one by \cite[(10.31)]{MV07}, 
\begin{equation} \label{EQ_212}
\sum_{\rho_\chi}  |\rho_\chi|^{-1-\delta} < \infty \quad \text{if $\delta>0$},
\end{equation}
where $\rho_\chi$ runs over all nontrivial zeros of $L(s,\chi)$ counted with multiplicity.
By an argument similar to the proof of \eqref{EQ_206}, we obtain 
\begin{equation}\label{EQ_213}
\sideset{}{'}\sum_{n \leq x}\frac{\Lambda(n)\chi(n)}{n^s}
=
- \frac{L^\prime}{L}(s,\chi) 
- \sum_{\rho_\chi} \frac{x^{\rho_\chi-s}}{\rho_\chi-s} 
+ \sum_{k=0}^{\infty} \frac{x^{-2k-\kappa-s}}{2k+\kappa+s}
\end{equation}
for $x>1$, $s\not=\rho_\chi, -2k-\kappa$ ($k \in \Z_{\geq 0}$) \cite[(6)]{Y91}. 

Suppose that $L(s, \chi)$ has a zero of order $m \geq 1$ at $s = 1/2$.  
By isolating the term of $\rho_\chi = 1/2$ in the second sum on the right-hand side of \eqref{EQ_213}, 
then taking the limit $s \to 1/2$, we obtain 
\begin{equation}\label{eq_0612_2}
\aligned 
\sideset{}{'}\sum_{n \leq x}\frac{\Lambda(n)\chi(n)}{\sqrt{n}}
& =
- m \log x
-\lim_{s \to 1/2} \left[\frac{L^\prime}{L}(s,\chi) - \frac{m}{s-1/2}\right] \\
& \quad - \sum_{\rho_\chi \not=1/2} \frac{x^{\rho_\chi-1/2}}{\rho_\chi-1/2}
+ \sum_{k=0}^{\infty} \frac{x^{-2k-\kappa-1/2}}{2k+\kappa+1/2}.
\endaligned 
\end{equation}

If $\chi$ is imprimitive, it is induced from a
primitive Dirichlet character $\chi^\ast$ modulo $q^\ast$, 
for some $q^\ast\mid q$, 
that is, 
\[
\chi(n) 
= 
\begin{cases}
~\chi^\ast(n) & (n,q)=1, \\
~0 & \text{otherwise}.
\end{cases}
\]
Therefore, 
\begin{equation} \label{EQ_214}
L(s,\chi) = L(s,\chi^\ast) \prod_{p \mid q}(1 - \chi^\ast(p) p^{-s})
\end{equation}
\cite[(9.1), (9.2)]{MV07}. For a principal character $\chi_0$ modulo $q$,   
$L(s,\chi_0) = \zeta(s) \prod_{p \mid q}(1 - p^{-s})$. 
We define the GRH to hold for $L(s,\chi)$ as the GRH to hold for $L(s,\chi^\ast)$. 

\subsection{Mellin transforms of nonnegative functions} 

We state a well-known property of the Laplace transform 
of nonnegative functions in terms of the Mellin transform 
\cite[Theorem 5b in Chap. II]{Wi41} 
(or \cite[Lemma 15.1]{MV07}). 
This is an analog of Landau's Theorem 
concerning Dirichlet series with nonnegative coefficients.

\begin{proposition} \label{prop_1} If $f(x)$ is nonnegative, 
then the real point of the axis of convergence of 
\[
F(s)=\int_{1}^{\infty} f(x) \, x^{-s} \, \frac{dx}{x} 
\]
is a singularity of $F(s)$. 
\end{proposition}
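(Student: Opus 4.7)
The plan is to prove Proposition \ref{prop_1} by adapting Landau's classical contradiction argument for Dirichlet series with nonnegative coefficients (as in \cite{Wi41}) to the Mellin setting. Denote by $\sigma_0$ the real point of the axis of convergence, so that $F$ is holomorphic on $\{\Re(s) > \sigma_0\}$ while $\int_1^\infty f(x)\,x^{-s}\,dx/x$ diverges for real $s < \sigma_0$. Suppose, for contradiction, that $F$ extends holomorphically across $s = \sigma_0$. Combining this with holomorphy on the half-plane, one can select a real $\sigma_1 > \sigma_0$ sufficiently close to $\sigma_0$ so that $F$ is holomorphic on an open disk centered at $\sigma_1$ of radius strictly greater than $\sigma_1 - \sigma_0$; equivalently, the Taylor series of $F$ about $\sigma_1$ converges at some real point $s_\ast < \sigma_0$.

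Next, I would compute the Taylor coefficients by differentiating under the integral sign, which is permissible on $\{\Re(s) > \sigma_0\}$ since the defining integral converges absolutely and uniformly on compacta there, yielding
\[
\frac{F^{(n)}(\sigma_1)}{n!} = \frac{(-1)^n}{n!}\int_1^\infty f(x)\,(\log x)^n\,x^{-\sigma_1}\,\frac{dx}{x}.
\]
Setting $r := \sigma_1 - s_\ast > 0$, the Taylor series at $s_\ast$ becomes
\[
F(s_\ast) = \sum_{n=0}^\infty \frac{(-r)^n}{n!}F^{(n)}(\sigma_1) = \sum_{n=0}^\infty \frac{r^n}{n!}\int_1^\infty f(x)\,(\log x)^n\,x^{-\sigma_1}\,\frac{dx}{x},
\]
where the two factors of $(-1)^n$ cancel, so every term is nonnegative thanks to $f(x) \geq 0$ and $\log x \geq 0$ for $x \geq 1$. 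By Tonelli's theorem, valid precisely because all integrands are nonnegative, one may swap the order of summation and integration to obtain
\[
F(s_\ast) = \int_1^\infty f(x)\,x^{-\sigma_1}\sum_{n=0}^\infty \frac{(r\log x)^n}{n!}\,\frac{dx}{x} = \int_1^\infty f(x)\,x^{-s_\ast}\,\frac{dx}{x},
\]
a convergent integral at the real point $s_\ast < \sigma_0$, contradicting the defining property of $\sigma_0$.

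The only delicate step is the first one: verifying that holomorphy at $\sigma_0$ really does yield a Taylor disk about some $\sigma_1 > \sigma_0$ whose radius exceeds $\sigma_1 - \sigma_0$. By hypothesis there exists $\delta > 0$ such that $F$ is holomorphic on $B(\sigma_0, \delta)$, and $F$ is also holomorphic on $\{\Re(s) > \sigma_0\}$, so $F$ is holomorphic on the union of these two sets. A short geometric check shows that this union contains the closed disk $\overline{B(\sigma_1, r)}$ whenever $\sigma_0 < \sigma_1 < \sigma_0 + \delta/2$ and $\sigma_1 - \sigma_0 < r < \delta/2$. Choosing such $\sigma_1$ and $r$, the Taylor series of $F$ at $\sigma_1$ then converges on $B(\sigma_1, r)$, and in particular at $s_\ast = \sigma_1 - r < \sigma_0$, which is all that is needed.
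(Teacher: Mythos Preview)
Your argument is the classical Landau contradiction and is correct; the geometric check at the end is fine (indeed your triangle-inequality bound already places $\overline{B(\sigma_1,r)}$ inside $B(\sigma_0,\delta)$, so the Taylor radius at $\sigma_1$ strictly exceeds $r$). Note, however, that the paper does not supply its own proof of this proposition: it is stated as a well-known fact with references to \cite[Theorem 5b, Chap.~II]{Wi41} and \cite[Lemma 15.1]{MV07}, so there is no ``paper's proof'' to compare against---you have simply filled in the standard argument those references contain.
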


\section{Proof of Theorems \ref{thm_1} and \ref{thm_2}} \label{section_3}

\subsection{Proof of Theorem \ref{thm_1}} 
The equality in \eqref{EQ_105} is obtained by integrating \eqref{EQ_103}. 
First, we prove \eqref{EQ_105} and \eqref{EQ_106} assuming the RH. 
We  set 
\[
f(x) := \sum_{n \leq x} \frac{\Lambda(n)}{\sqrt{n}} \log\frac{x}{n}. 
\]
By applying \cite[Lemma 1]{So09} to $s=1/2$, we obtain 
\begin{equation} \label{EQ_301}
\aligned 
f(x) - 4 \sqrt{x}
& =
-\frac{\zeta'}{\zeta}\left(\frac{1}{2}\right) \log x 
- \left(\frac{\zeta'}{\zeta}\right)^\prime\left(\frac{1}{2}\right) \\
& \quad 
- \sum_\rho \frac{x^{\rho-1/2}}{(\rho-1/2)^2} 
- \sum_{k=1}^\infty \frac{x^{-2k-1/2}}{(2k+1/2)^2} 
\endaligned 
\end{equation}
for $x \geq 2$. 
The second sum on the second line of the right-hand side is clearly bounded  for $x \geq 2$. 
The first sum on the second line of the right-hand side is also bounded for $x \geq 2$, 
because $\rho-1/2$ is purely imaginary by the RH 
and \eqref{EQ_205}. On the other hand, 
\[
\frac{\xi'}{\xi}\left(\frac{1}{2}\right)=
\frac{\zeta'}{\zeta}\left(\frac{1}{2}\right)
+ \frac{1}{2}\left[ \frac{\Gamma'}{\Gamma}\left(\frac{1}{4}\right) - \log \pi \right] = 0
\]
by taking the logarithmic derivative of 
the first equation in \eqref{EQ_204}
and using $(\Gamma'/\Gamma)(1/4)=-\pi/2- 3\log 2 - C_0=-4.22745\dots<0$ 
(\cite[p. 411]{MV07}).
Therefore, 
\begin{equation} \label{EQ_302}
\frac{\zeta'}{\zeta}\left(\frac{1}{2}\right)=
\frac{1}{2}\left(\frac{\pi}{2}+\log 8\pi + C_0\right)=2.68609\dots>0.
\end{equation}
Hence, \eqref{EQ_105} holds for all sufficiently large $x \geq 2$. 
Furthermore, dividing both sides of \eqref{EQ_301} by $\log x$ 
and taking the limit as $x \to \infty$, 
we obtain \eqref{EQ_106}. 

By \eqref{EQ_302}, equality \eqref{EQ_106} implies that 
\eqref{EQ_105} holds for all sufficiently large $x$. 
Therefore, it remains to show that the RH holds 
if there exist $x_0 \geq 2$ such that \eqref{EQ_105} is valid for all $x \geq x_0$. 
The key point is the integral formula
\begin{equation} \label{EQ_303} 
\int_{1}^{\infty}  
(-f(x)) \,x^{-s+1/2} \, \frac{dx}{x} 
= 
\frac{1}{(s-1/2)^2} \frac{\zeta'}{\zeta}(s) 
\quad \text{for}~\Re(s)>1, 
\end{equation} 
which is derived as follows. 
For an interval $I \subset \R$, 
we denote by $\mathbf{1}_I$ the indicator function of $I$. 
Using the integral formula 
\begin{equation*} 
\int_{1}^{\infty} \frac{1}{\sqrt{n}}\log\frac{x}{n} \cdot \mathbf{1}_{[n,\infty)}(x)\cdot x^{-s+1/2} \, \frac{dx}{x} 
= \frac{1}{(s-1/2)^2} \, n^{-s} \quad \text{for}~\Re(s)>1/2  
\end{equation*}
obtained by a straightforward calculation for any $n \in \Z_{>0}$, 
we have
\begin{equation*} 
\aligned 
\int_{1}^{\infty}  (-f(x)) \, x^{-s+1/2} \, \frac{dx}{x} 
& = - \sum_{n=2}^{\infty}  \Lambda(n)
\int_{1}^{\infty} 
\frac{1}{\sqrt{n}}\log\frac{x}{n}\cdot \mathbf{1}_{[n,\infty)}(x)\cdot x^{-s+1/2} \, \frac{dx}{x}  \\
& = 
\frac{1}{(s-1/2)^2}\left(-\sum_{n=2}^{\infty} \Lambda(n)\, n^{-s} \right)
\endaligned 
\end{equation*} 
if $\Re(s)>1$,  
where the interchange of the order of integration and summation 
is justified by the absolute convergence of the series for $\Re(s)>1$ and Fubini's theorem. 
Therefore, we obtain \eqref{EQ_303} by \eqref{EQ_203}. 

On the other hand, 
$(\zeta'/\zeta)(s)$ has no poles in the half-line $(1/2,\infty)$ on the real axis 
except for the simple pole at $s=1$ with residue $-1$, 
because $\zeta(s)$ has the simple pole at $s=1$ with residue one and 
has no zeros in the half-line $(1/2,\infty)$ 
by the series expansion 
$(1-2^{1-s})\zeta(s) = \sum_{n=1}^{\infty} (-1)^{n-1}n^{-s}>0$ for $s>0$.  

By the integral formula \eqref{EQ_303} and $\int_{1}^{\infty}\sqrt{x} \cdot x^{-s+1/2} dx/x
= 1/(s-1)$ for $\Re(s)>1$, 
\begin{equation} \label{EQ_304}
\aligned  
\int_{1}^{\infty}  & 
(-f(x)+4\sqrt{x}\,) x^{-s+1/2} \, \frac{dx}{x} \\
& = 
\frac{1}{(s-1/2)^2} \frac{\zeta'}{\zeta}(s) +\frac{4}{s-1}
\quad \text{for}~\Re(s)>1.
\endaligned 
\end{equation} 
The simple pole of 
$(s-1/2)^{-2}(\zeta'/\zeta)(s)$ at $s=1$ 
is canceled out by the second term of the right-hand side. 
If we divide the left-hand side of \eqref{EQ_304} into 
\[
\left( \int_{x_0}^{\infty} + \int_{1}^{x_0} \right)
(-f(x)+4\sqrt{x}\,) x^{-s+1/2} \, \frac{dx}{x}, 
\]
the second integral defines an entire function. 
Applying Proposition~\ref{prop_1} to the first integral, 
we find that the abscissa of convergence of this integral is at most $1/2$. 
Therefore, the integral converges uniformly 
on any compact subset of the right half-plane $\Re(s) > 1/2$, 
and hence defines an analytic function there. 
This implies, by \eqref{EQ_304}, that $(\zeta'/\zeta)(s)$ has no poles 
in the right half-plane $\Re(s) > 1/2$, 
which in turn implies that the RH holds.
\hfill $\Box$

\subsection{Proof of Corollary \ref{cor_1}}

Assuming the RH, 
it is shown that $-g_0(t)$ is nonnegative for all sufficiently large $t>0$ 
by \eqref{EQ_109} and \eqref{EQ_301} as in the proof of Theorem \ref{thm_1}. 
Therefore, 
we prove the converse. 
Using \eqref{EQ_109}, \eqref{EQ_303}, and the integral formula 
\[
\int_{0}^{\infty} 4(e^{t/2}+e^{-t/2}-2)\, e^{-t(s-1/2)} \, dt 
= \frac{1}{(s-1/2)^2}\,\left(\frac{1}{s-1}+\frac{1}{s}\right) \quad \text{for}~\Re(s)>1, 
\]
we have 
\begin{equation} \label{EQ_305}
\int_{0}^{\infty} (-g_0(t)) \, e^{-t(s-1/2)} \, dt 
= \frac{1}{(s-1/2)^2} 
\left( \frac{\zeta'}{\zeta}(s)  
+ \frac{1}{s-1}  + \frac{1}{s}  \right) 
\end{equation} 
for $\Re(s)>1$. The simple pole of 
$(\zeta'/\zeta)(s)$ at $s=1$ 
is canceled out by the second term of the right-hand side. 
Hence, if we use \eqref{EQ_305} instead of \eqref{EQ_304}, 
the RH follows from the nonnegativity of $-g_0(t)$ for all sufficiently large $t>0$ 
as in the proof of Theorem \ref{thm_1}. 
\hfill $\Box$

\subsection{Proof of Theorem \ref{thm_2}} 

First, we prove that the nonpositivity \eqref{EQ_110} 
for all sufficiently large $x>0$ implies the RH. 
Using the integral formula 
\begin{equation*} 
\int_{1}^{\infty}\frac{1}{\sqrt{n}} \cdot \mathbf{1}_{[n,\infty)}(x)\cdot x^{-s+1/2} \, \frac{dx}{x} 
= \frac{1}{s-1/2} \, n^{-s} \quad \text{for}~\Re(s)>1/2  
\end{equation*}
obtained by a straightforward calculation for any $n \in \Z_{>0}$, 
we have
\begin{equation*} 
\aligned 
\int_{1}^{\infty}  
\left(\sum_{n \leq x} \frac{\Lambda(n)}{\sqrt{n}}\right) x^{-s+1/2} \, \frac{dx}{x} 
& = 
-\frac{1}{s-1/2}\frac{\zeta'}{\zeta}(s)  
\endaligned 
\end{equation*} 
for $\Re(s)>1$.  
Combining this with \eqref{EQ_303}
 and denoting $2$ as $\log \exp(2)$, we obtain
\begin{equation} \label{EQ_306}
\int_{1}^{\infty}  
\left(-\sum_{n \leq x} \frac{\Lambda(n)}{\sqrt{n}}\log\frac{x}{ne^2}
\right) x^{-s+1/2} \, \frac{dx}{x} 
= -
\frac{2(s-1)}{(s-1/2)^2} \frac{\zeta'}{\zeta}(s) 
\quad \text{for}~\Re(s)>1.
\end{equation}
On the right-hand side, 
as stated in the proof of Theorem \ref{thm_1}, 
$(\zeta'/\zeta)(s)$ has no poles in the half-line $(1/2,\infty)$ 
except for the simple pole at $s=1$, 
but it is canceled out by $s-1$. 
Hence, it is shown that the nonpositivity \eqref{EQ_110} 
for sufficiently large $x>0$ 
implies the RH as in the proof of Theorem \ref{thm_1}. 
\medskip

We prove the second half of the theorem. 
Applying \eqref{EQ_206} to $s=1/2$, 
\begin{equation} \label{EQ_307}
2\sqrt{x} 
=
\psi_{1/2}(x) 
+ \sum_{\rho} \frac{x^{\rho-1/2}}{\rho-1/2}  
+ \frac{\zeta'}{\zeta}\left(\frac{1}{2}\right)
- \sum_{k=1}^{\infty} \frac{x^{-2k-1/2}}{2k+1/2}
\end{equation}
for $x>1$. Substituting \eqref{EQ_307} into \eqref{EQ_301}, 
\begin{equation} \label{EQ_308}
\aligned 
\sum_{n \leq x} \frac{\Lambda(n)}{\sqrt{n}} \log\frac{x}{n} 
-2\sideset{}{'}\sum_{n \leq x} \frac{\Lambda(n)}{\sqrt{n}} 
& =
-\frac{\zeta'}{\zeta}\left(\frac{1}{2}\right) \log x \\
& \quad 
+ 2\sum_{\rho} \frac{x^{\rho-1/2}}{\rho-1/2}
- \sum_\rho \frac{x^{\rho-1/2}}{(\rho-1/2)^2} \\
& \quad 
- \left(\frac{\zeta'}{\zeta}\right)^\prime\left(\frac{1}{2}\right) 
+ 2\frac{\zeta'}{\zeta}\left(\frac{1}{2}\right) \\
& \quad 
- \sum_{k=1}^\infty \frac{x^{-2k-1/2}}{(2k+1/2)^2} 
- 2\sum_{k=1}^{\infty} \frac{x^{-2k-1/2}}{2k+1/2}. 
\endaligned 
\end{equation}
By denoting $2$ as $\log \exp(2)$, 
the left-hand side of \eqref{EQ_308} is written as
\[
\sum_{n \leq x} \frac{\Lambda(n)}{\sqrt{n}} \log\frac{x}{n e^2} 
+O\left(\frac{1}{\sqrt{x}}\log x\right), 
\]
where the error term arises by replacing $\sideset{}{'}\sum_{n \leq x}$ with $\sum_{n \leq x}$. 
Substituting this into \eqref{EQ_308}, 
and then replacing $x$ with $xe^2$ and dividing by $\log x$, we have 
\begin{equation} \label{EQ_309}
\aligned 
\sum_{n \leq xe^2} \frac{\Lambda(n)}{\sqrt{n}}\left(1-\frac{\log n}{\log x} \right)
& =
-\frac{\zeta'}{\zeta}\left(\frac{1}{2}\right) 
+ \frac{2}{\log x} \sum_{\rho} \frac{(xe^2)^{\rho-1/2}}{\rho-1/2} \\
& \quad - \frac{1}{\log x} \sum_\rho \frac{(xe^2)^{\rho-1/2}}{(\rho-1/2)^2} 
+O\left(\frac{1}{\log x}\right)
\endaligned 
\end{equation}
as $x \to \infty$. 

Suppose that equation \eqref{EQ_111} holds. 
Then, inequality \eqref{EQ_110} is valid for all sufficiently large $x>0$ by \eqref{EQ_302}, 
and therefore, the RH holds from the first half of the theorem. 
From the RH, the third term on the right hand side of \eqref{EQ_309} is $O(1/\log x)$, 
and  
\[
\sum_\rho \frac{x^{\rho-1/2}}{\rho-1/2} - \sum_\rho \frac{x^{\rho-1/2}}{\rho}
= \frac{1}{2}\sum_\rho \frac{x^{\rho-1/2}}{\rho(\rho-1/2)} =O(1). 
\]
Hence, \eqref{EQ_111} implies that \eqref{EQ_112} holds. 
Conversely, assuming that the RH and \eqref{EQ_112} hold, 
\eqref{EQ_111} follows from \eqref{EQ_309}. 
\hfill $\Box$

\section{Proof of Theorems \ref{thm_3}, \ref{thm_4}, and \ref{thm_5}} \label{section_4}

\subsection{Proof of Theorem \ref{thm_3}}

We prove the following theorem 
as Theorem \ref{thm_3} is obtained by applying it to $\chi=\chi_4$. 

\begin{theorem} \label{thm_8}
Let $\chi$ be a non-principal Dirichlet character of modulo $q$,   
and denote $L(s,\chi)$ by $L(s)$.
Suppose that there exists $x_0 \geq 2$ such that 
\begin{equation} \label{EQ_401}
\sum_{n \leq x} \frac{\Lambda(n)}{\sqrt{n}}\Re(\chi(n))\log\frac{x}{n} 
\end{equation}
has a constant sign for all $x \geq x_0$, 
and $L(\sigma)\not=0$ for $\sigma>\beta$ 
for some $1/2 \leq \beta <1$. 
Then, $L(s)\not=0$ in the right-half plane $\Re(s)>\beta$. 
In particular, if \eqref{EQ_401} has a constant sign for all sufficiently large $x>0$,  
and $L(\sigma)\not=0$ for $\sigma>1/2$, 
then the GRH for $L(s)$ holds. 
 
Suppose that $L(1/2)\not=0$. Then, we have 
\begin{equation} \label{EQ_402} 
\lim_{x \to \infty}
\sum_{n \leq x} \frac{\Lambda(n)\chi(n)}{\sqrt{n}}
\left(1- \frac{\log n}{\log x}\right) = -\frac{L'}{L}\left(\frac{1}{2}\right)
\end{equation}
if and only if the GRH for $L(s)$ holds.  
If $\chi$ is primitive, then 
equation \eqref{EQ_402} implies that 
the sum \eqref{EQ_401} has a constant sign 
for all sufficiently large $x>0$. 

Furthermore, suppose that $s=1/2$ is a zero of $L(s)$ of order $m \geq 0$. 
Then, we have 
\begin{equation} \label{eq_0612_1} 
\lim_{x \to \infty} \frac{1}{\log x}
\sum_{n \leq x} \frac{\Lambda(n)\chi(n)}{\sqrt{n}}
\left(1- \frac{\log n}{\log x}\right) = -\frac{m}{2}
\end{equation}
if and only if the GRH for $L(s)$ holds. 
Equation \eqref{eq_0612_1} implies that 
the value of the sum \eqref{EQ_401} is negative 
for all sufficiently large $x>0$ if $m \geq 1$. 
\end{theorem}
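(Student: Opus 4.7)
My plan is to follow the proof template of Theorem~\ref{thm_1}, with the twist that \eqref{EQ_401} involves $\Re(\chi(n))=(\chi(n)+\bar\chi(n))/2$, so one works with the symmetrization of the Mellin transforms attached to $\chi$ and $\bar\chi$. The starting point, valid for $\Re(s)>1$, is the character analog of \eqref{EQ_303},
\[
\int_{1}^{\infty}\Bigl(-\sum_{n\leq x}\frac{\Lambda(n)\chi(n)}{\sqrt{n}}\log\frac{x}{n}\Bigr)x^{-s+1/2}\,\frac{dx}{x}=\frac{1}{(s-1/2)^2}\,\frac{L'}{L}(s,\chi),
\]
derived from \eqref{EQ_209} by the same termwise calculation that gives \eqref{EQ_303}. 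Averaging with its $\bar\chi$ counterpart produces the Mellin transform of the real-valued sum in \eqref{EQ_401}, equal to $\frac{1}{2(s-1/2)^2}\bigl[\frac{L'}{L}(s,\chi)+\frac{L'}{L}(s,\bar\chi)\bigr]$.

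For the first assertion I decompose the integral as $\int_1^{x_0}+\int_{x_0}^\infty$. The first piece is entire; the second has a fixed-sign integrand by hypothesis, so Proposition~\ref{prop_1} locates its abscissa of convergence at a real singularity of the analytically continued transform. The singularities of $\frac{L'}{L}(s,\chi)+\frac{L'}{L}(s,\bar\chi)$ are simple poles with \emph{positive} residues at the zeros of $L(s,\chi)L(s,\bar\chi)$, so cancellation is impossible; combined with $L(\sigma,\bar\chi)=\overline{L(\sigma,\chi)}$ on $\R$, the hypothesis $L(\sigma,\chi)\neq 0$ for $\sigma>\beta$ rules out all such real singularities on $(\beta,\infty)$. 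Hence the abscissa of convergence is at most $\beta$, the transform continues analytically to $\Re(s)>\beta$, and the identity theorem forces $L(s,\chi)L(s,\bar\chi)$ to be zero-free in that half-plane; specializing to $\beta=1/2$ yields GRH.

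For the equivalence in the second assertion, the forward direction follows by specializing \cite[Lemma~1]{So09} to $L(s,\chi)$ at $s=1/2$ (valid because $L(1/2,\chi)\neq 0$) to produce the analog of \eqref{EQ_301}; under GRH the zero sum $\sum_{\rho_\chi}x^{\rho_\chi-1/2}/(\rho_\chi-1/2)^2$ is bounded by $\sum_{\rho_\chi}|\rho_\chi-1/2|^{-2}<\infty$, so dividing by $\log x$ yields \eqref{EQ_402}. For the converse I bypass sign constancy and argue via the Mellin transform directly. Writing $C:=-\frac{L'}{L}(1/2,\chi)$ and $S(x):=\sum_{n\leq x}\Lambda(n)\chi(n)n^{-1/2}\log(x/n)$, \eqref{EQ_402} forces $S(x)-C\log x=o(\log x)$, so $\int_1^\infty[S(x)-C\log x]x^{-s+1/2}\,dx/x$ converges absolutely and is analytic throughout $\Re(s)>1/2$. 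Using $\int_1^\infty\log x\cdot x^{-s+1/2}\,dx/x=(s-1/2)^{-2}$, this integral equals for $\Re(s)>1$, and by analytic continuation for all $\Re(s)>1/2$,
\[
-\frac{1}{(s-1/2)^2}\Bigl[\frac{L'}{L}(s,\chi)-\frac{L'}{L}(1/2,\chi)\Bigr],
\]
whose only possible singularities in $\Re(s)>1/2$ are simple poles at zeros of $L(s,\chi)$ there; analyticity of the left-hand side precludes such zeros, proving GRH. The supplementary statement for primitive $\chi$ follows by taking real parts of \eqref{EQ_402} and invoking the explicit evaluations \eqref{EQ_302}, \eqref{EQ_406}, \eqref{eq_0613_1} alluded to in the introduction to rule out the vanishing of $\Re\frac{L'}{L}(1/2,\chi)$.

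The third assertion is handled by the same blueprint with \eqref{eq_0612_2} in place of \eqref{EQ_301}: integrating the Perron-type identity for $\psi_{1/2}(x,\chi)$ against $d\log x$ produces a dominant $-\frac{m}{2}(\log x)^2$ term, so division by $(\log x)^2$ gives the limit $-m/2$ under GRH. Conversely, assuming \eqref{eq_0612_1}, set $T(x):=S(x)+\frac{m}{2}(\log x)^2=o((\log x)^2)$; then $\int_1^\infty T(x)x^{-s+1/2}\,dx/x$ converges absolutely in $\Re(s)>1/2$ and equals, for $\Re(s)>1$ and then by analytic continuation on $\Re(s)>1/2$, the expression $-\frac{L'}{L}(s,\chi)(s-1/2)^{-2}+m(s-1/2)^{-3}$; any zero of $L(s,\chi)$ with $\Re(s)>1/2$ would produce a simple pole of the right-hand side incompatible with analyticity of the left-hand side, forcing GRH. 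The principal technical hurdle throughout is to maintain the analytic continuation of the transform past the delicate pole structure at $s=1/2$ while exploiting the positivity of residues of $L'/L$ at its poles, which is what prevents spurious cancellation of the singularities that encode the off-line zeros.
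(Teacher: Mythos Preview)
Your proposal is correct and follows essentially the same route as the paper: the Mellin identity \eqref{EQ_404}, its symmetrization over $\chi$ and $\bar\chi$ to handle $\Re(\chi(n))$, Proposition~\ref{prop_1} together with the positivity of the residues of $L'/L$ to rule out cancellation, the explicit formula \eqref{EQ_405}/\eqref{eq_0612_5} (the $L$-function analog of Soundararajan's lemma) for the forward implications, and the non-vanishing of $\Re\frac{L'}{L}(1/2,\chi)$ via \eqref{EQ_406} for the sign conclusion. The only cosmetic difference is that in the converse directions you first subtract off $C\log x$ or $\tfrac{m}{2}(\log x)^2$ before integrating, whereas the paper simply observes $f_\chi(x)\ll\log x$ (resp.\ $\ll(\log x)^2$) and integrates $f_\chi$ directly; both give absolute convergence of the Mellin integral on $\Re(s)>1/2$ and hence analyticity of $(s-1/2)^{-2}\frac{L'}{L}(s,\chi)$ there.
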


\noindent
{\bf Remark.} 
It is conjectured that $L(\sigma,\chi) \ne 0$ 
for all $1/2 \le \sigma \le 1$ and for all $\chi$. 
This conjecture has been confirmed for primitive Dirichlet characters of modulus $q \le 4 \cdot 10^5$ 
by Platt~\cite{Pl16}, and for real odd Dirichlet characters of modulus $q \le 3 \cdot 10^8$ 
by Watkins~\cite{Wa04}. 
It has also been confirmed by Platt~\cite{Pl16} that $L(1/2,\chi) \ne 0$ 
for primitive Dirichlet characters of modulus $q \le 2 \cdot 10^6$.

\begin{proof} 
We provide only a sketch of the proof, as the argument is similar to that of 
Theorems \ref{thm_1} and \ref{thm_2}. 
However, we carefully address the points that require particular consideration 
in the case of $L(s)$. 
We set
\begin{equation} \label{EQ_403} 
f_\chi(x):=\sum_{n \leq x} \frac{\Lambda(n)\chi(n)}{\sqrt{n}}\log\frac{x}{n}. 
\end{equation}
Then, the integral formula 
\begin{equation} \label{EQ_404}
\int_{1}^{\infty}  
(-f_\chi(x)) \, x^{-s+1/2} \, \frac{dx}{x} 
= \frac{1}{(s-1/2)^2} \frac{L^\prime}{L}(s) 
\quad \text{for}~\Re(s)>1
\end{equation} 
is obtained in the same way as \eqref{EQ_303} 
using the series expansion \eqref{EQ_209}. 
On the right-hand side, there exists $1/2 \leq \beta < 1$ such that 
$L(\sigma)\not=0$ for any real number $\sigma>\beta$,  
because $L(s)$ is analytic in $\C$,  
and has no poles and zeros in the half-line $[1,\infty) \subset \R$ 
by the Euler product \eqref{EQ_208} and Dirichlet's theorem \eqref{EQ_207}. 
Therefore, if $\chi$ is real, the first half of the theorem 
follows similarly to the proof of Theorem \ref{thm_1}. 

We consider the case where \( \chi \) is not real.  
We replace \( \chi \) by \( \bar{\chi} \) in \eqref{EQ_404}.  
Combining this with \eqref{EQ_404}, we have 
\begin{equation} \label{eq_0611_1}
\aligned 
\int_{1}^{\infty}  
\left( 
-\sum_{n \leq x} \frac{\Lambda(n)}{\sqrt{n}} \Re(\chi(n))\log\frac{x}{n}
\right) & x^{-s+1/2} \, \frac{dx}{x} \\
& 
= \frac{1}{2(s - 1/2)^2} 
\left( 
\frac{L'}{L}(s, \chi) + \frac{L'}{L}(s, \bar{\chi}) 
\right).
\endaligned 
\end{equation}
By $L(s, \bar{\chi}) = \overline{L(\bar{s}, \chi)}$,  
the assumption that \( L(\sigma, \chi) \ne 0 \) for \( \sigma > \beta \)  
implies \( L(\sigma, \bar{\chi}) \ne 0 \) for \( \sigma > \beta \).  
This yields that the right-hand side of \eqref{eq_0611_1}  
has no poles on the interval \( (\beta, \infty) \subset \R\).  
Thus, the assumption on \eqref{EQ_401} implies 
that the abscissa of convergence for the integral on the left-hand side of 
\eqref{eq_0611_1} is at most $\beta$ thanks to Proposition~\ref{prop_1}.  
This means that the right-hand side of \eqref{eq_0611_1}
is analytic in $\Re(s) > \beta$.  
This forces $L(s, \chi) \ne 0$ in $\Re(s) > \beta$, 
since the residues of the logarithmic derivative at the zeros are positive integers and hence cannot cancel between  $(L'/L)(s,\chi)$ and $(L'/L)(s,\bar{\chi})$. 
\medskip

We prove the second half of the theorem. 
Let $\chi^\ast$ be a primitive character that induces $\chi$. 
Then, by applying the argument in the proof of \cite[Lemma 1]{So09} 
to $L(s)$, 
we obtain 
\begin{equation} \label{eq_0612_3} 
\aligned 
\sum_{n \leq x} \frac{\Lambda(n)\chi(n)}{n^s}\log\frac{x}{n}
& =
-\frac{L'}{L}(s) \log x 
- \sum_\rho \frac{x^{\rho-s}}{(\rho-s)^2} 
\\
& \quad 
- \left(\frac{L'}{L}\right)^\prime(s)
- \sum_{k=0}^{\infty} \frac{x^{-2k-\kappa^\ast-s}}{(2k+\kappa^\ast+s)^2}
\endaligned 
\end{equation}
for $x>1$, $s\not=\rho_\chi, -2k-\kappa^\ast$ ($k \in \Z_{\geq 0}$). 
If $L(1/2) \not= 0$, that is, if $m = 0$, then substituting $s = 1/2$ into \eqref{eq_0612_3} yields
\begin{equation} \label{EQ_405} 
\aligned 
f_\chi(x)
& =
-\frac{L'}{L}\left(\frac{1}{2}\right) \log x 
- \sum_\rho \frac{x^{\rho-1/2}}{(\rho-1/2)^2} 
\\
& \quad 
- \left(\frac{L'}{L}\right)^\prime\left(\frac{1}{2}\right) 
- \sum_{k=0}^{\infty} \frac{x^{-2k-\kappa^\ast-1/2}}{(2k+\kappa^\ast+1/2)^2},  
\endaligned 
\end{equation}
where the sum $\sum_\rho$ is taken over all nontrivial zeros of $L(s,\chi^\ast)$,  
which coincide with all zeros of  $\xi(s,\chi^\ast)$, 
and the zeros of the finite product $L(s)/L(s,\chi^\ast)=\prod_{p\mid q}(1-\chi^\ast(p)p^{-s})$, 
and $\kappa^\ast=\kappa(\chi^\ast)$. 

Suppose that $L(s)$ has a zero of order $m \geq 1$ at $s = 1/2$. 
Then, we have
\begin{equation} \label{eq_0612_4}
\aligned 
\lim_{s \to 1/2} & \left(
-\frac{L'}{L}(s) \log x 
- \frac{mx^{-s+1/2}}{(-s+1/2)^2} 
- \left(\frac{L'}{L}\right)^\prime(s) 
\right) \\
& = -\frac{m}{2}(\log x)^2
-\lim_{s \to 1/2} \left[\frac{L^\prime}{L}(s,\chi) - \frac{m}{s-1/2}\right] \, \log x \\
& \quad 
-\lim_{s \to 1/2} \left[\left(\frac{L^\prime}{L}\right)^\prime(s) + \frac{m}{(s-1/2)^2}\right] .
\endaligned 
\end{equation}
Therefore, 
by isolating the term of $\rho = 1/2$ in the second sum on the right-hand side of \eqref{eq_0612_3}, 
then taking the limit $s \to 1/2$, we obtain 
\begin{equation} \label{eq_0612_5}
\aligned 
f_\chi(x)
& = -\frac{m}{2}(\log x)^2
-\lim_{s \to 1/2} \left[\frac{L^\prime}{L}(s,\chi) - \frac{m}{s-1/2}\right] \, \log x \\
& \quad 
-\lim_{s \to 1/2} \left[\left(\frac{L^\prime}{L}\right)^\prime(s) + \frac{m}{(s-1/2)^2}\right] \\
& \quad 
- \sum_{\rho\not=1/2} \frac{x^{\rho-1/2}}{(\rho-1/2)^2} 
- \sum_{k=0}^{\infty} \frac{x^{-2k-\kappa^\ast-1/2}}{(2k+\kappa^\ast+1/2)^2}.  
\endaligned 
\end{equation}

Assuming the GRH for $L(s)$, 
each term of the right-hand side \eqref{EQ_405} 
is bounded except for the first term, 
and each term of the right-hand side \eqref{eq_0612_5} 
is bounded except for the first and second terms. 
Hence, \eqref{EQ_402} and \eqref{eq_0612_1} hold.  

Conversely, 
assuming \eqref{EQ_402} (resp. \eqref{eq_0612_1}), we have 
$f_\chi(x) \ll \log x$ (resp. $\ll (\log x)^2$) as $x \to \infty$. 
Therefore, the integral on the left-hand side of  \eqref{EQ_404} 
converges uniformly on any compact subset in $\Re(s)>1/2$. 
Thus, it defines an analytic function in $\Re(s)>1/2$, 
and hence, 
$L(s)$ has no zeros (or poles) in $\Re(s)>1/2$, 
that is the GRH for $L(s)$. 
\medskip

Finally, we prove that the sum \eqref{EQ_401} has a constant sign 
for all sufficiently large $x>0$ assuming $L(1/2) \ne 0$, \eqref{EQ_402} 
and that $\chi$ is primitive. 
Taking the logarithmic derivative of \eqref{EQ_211} 
and substituting $s=1/2$, we obtain $\Re(\xi'/\xi)(1/2,\chi)=0$. 
This gives 
\begin{equation} \label{EQ_406}
\Re\frac{L'}{L}\left(\frac{1}{2}\right)
=\frac{1}{2}
\left[ \log\frac{\pi}{q}-\frac{\Gamma'}{\Gamma}\left(\frac{1}{4}+\frac{\kappa(\chi)}{2}\right) 
\right]
\end{equation}
by \eqref{EQ_210}. 
In particular, $\Re(L'/L)(1/2)$ depends only on $q$ and the parity of $\chi$. 
By \eqref{EQ_406}, 
the value $\Re(L'/L)(1/2)$ vanishes only when $q>1$ takes the following values
\[
q = \pi \cdot \exp\left[-\frac{\Gamma'}{\Gamma}\left( \frac{1}{4} + \frac{\kappa(\chi)}{2} \right)\right]
= 
\begin{cases}
~215.332\cdots, & \kappa=0, \\
~9.305\cdots, & \kappa=1,
\end{cases}
\]
however it is impossible for any integer $q>1$, where 
\begin{equation} \label{eq_0613_1}
\frac{\Gamma'}{\Gamma}\left( \frac{1}{4} + \frac{\kappa(\chi)}{2} \right)
= \left( \kappa(\chi) - \frac{1}{2} \right) \pi - 3\log 2 -C_0
\end{equation}
(\cite[p. 411]{MV07}). 
Therefore, $\Re(L'/L)(1/2)$ is a nonzero real number. 
Hence, by taking the real part of \eqref{EQ_402}, 
it follows that \eqref{EQ_401} has a constant sign for all sufficiently large $x>0$.  
\end{proof}

\noindent
{\bf Proof of Theorem \ref{thm_3}}. 
From the second half of Theorem \ref{thm_8}, 
we see that the first and third claims of Theorem \ref{thm_3} are equivalent. 
Using the series expansion $\lim_{N \to \infty}\sum_{n=0}^{N}(-1)^{n}(2n+1)^{-s}$ 
for $\Re(s)>0$, 
we find that $L(s, \chi_4)$ has no zeros on the positive real line $(0,\infty)$.   
Therefore, by the first half of Theorem \ref{thm_8}, 
the second claim of Theorem \ref{thm_3} implies the first claim. 
Assuming the first claim of Theorem \ref{thm_3}, 
it follows that the third claim holds by the equivalence we have already shown. 
Furthermore, since 
$\chi_4$ is primitive, the third claim implies the second claim.
\hfill $\Box$

\subsection{Proof of Theorem \ref{thm_4}}

We prove the following theorem 
as Theorem \ref{thm_4} is obtained by applying it to $\chi=\chi_4$. 
Note that $L(1/2,\chi_4) \ne 0$ as mentioned in the proof of Theorem \ref{thm_3}. 

\begin{theorem} 
Let $\chi$ be a real Dirichlet character such that 
$L(\sigma,\chi)\not=0$ for $\sigma>\beta~(\geq 1/2)$.  
Suppose that 
\begin{equation} \label{EQ_407}
\sum_{p \leq x} \chi(p) \log p \cdot \sqrt{\frac{x}{p}}\,\log\frac{x}{p}  \leq 0
\end{equation}
for all sufficiently large $x>1$. 
Then, $L(s,\chi) \not=0$ in the right-half plane $\Re(s)>\beta$. 
In particular, if $\beta=1/2$, the GRH for $L(s,\chi)$ holds. 

Conversely, suppose that the GRH for $L(s,\chi)$ holds and $L(1/2,\chi) \ne 0$. 
Then, the asymptotic formula 
\begin{equation} \label{EQ_408}
\sum_{p \leq x} \chi(p) \log p \cdot \sqrt{\frac{x}{p}}\,\log\frac{x}{p}  = -\frac{\sqrt{x}}{4}(\log x)^2
\left(1+O\left(\frac{1}{\log x}\right) \right)
\end{equation}
holds as $x \to \infty$. Therefore, 
\eqref{EQ_407} is valid for all sufficiently large $x>1$, 
and the left-hand side of \eqref{EQ_407} tends to $-\infty$ as $x \to \infty$. 

\end{theorem}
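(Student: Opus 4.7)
The proof adheres to the Mellin-transform strategy used in Theorems~\ref{thm_1}--\ref{thm_3}, with the added step of separating primes from higher prime powers.

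\emph{First direction.} Using $\int_p^\infty \log(x/p)\,x^{-s+1/2}\,dx/x = p^{-s+1/2}/(s-1/2)^2$ as in the derivation of \eqref{EQ_303}, I would compute
\[
\int_1^\infty S(x)\, x^{-s}\,\frac{dx}{x} = \frac{1}{(s-1/2)^2}\sum_p \frac{\chi(p)\log p}{p^s}\qquad(\Re(s)>1),
\]
where $S(x)$ denotes the left-hand side of \eqref{EQ_407}. The standard identity $\sum_p \chi(p)\log p\cdot p^{-s} = -(L'/L)(s,\chi) - \Phi(s)$, with $\Phi(s):=\sum_{k\ge 2}\sum_p \chi^k(p)\log p\cdot p^{-ks}$ analytic in the open half-plane $\Re(s)>1/2$ (the borderline $k=2$ term reduces, for real $\chi$, to $\sum_{p\nmid q}\log p\cdot p^{-2s}$), then supplies the desired meromorphic continuation. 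Splitting $\int_1^\infty(-S(x))x^{-s}dx/x$ as $\int_1^{x_0} + \int_{x_0}^\infty$ and applying Proposition~\ref{prop_1} to the second piece (which has nonnegative integrand by hypothesis), its abscissa of convergence $\sigma_0$ is a real singularity of $(s-1/2)^{-2}[(L'/L)(s,\chi)+\Phi(s)]$. Since $\beta\ge 1/2$ excludes $s=1/2$ from $\Re(s)>\beta$, and since $L$ has no real zeros in $(\beta,\infty)$ by hypothesis, there is no real singularity in $(\beta,\infty)$, forcing $\sigma_0\le\beta$. Hence the continuation is analytic in $\Re(s)>\beta$; any zero of $L(s,\chi)$ in that half-plane would produce a pole of $L'/L$, a contradiction.

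\emph{Converse direction.} Write $S(x)/\sqrt{x}=f_\chi(x) - P(x)$, where $f_\chi$ is from \eqref{EQ_403} and
\[
P(x):=\sum_{k\ge 2}\sum_{p^k\le x}\frac{\chi^k(p)\log p}{p^{k/2}}\log\frac{x}{p^k}
\]
is the prime-power correction. Under GRH with $L(1/2,\chi)\ne 0$, \eqref{EQ_405} together with \eqref{EQ_212} gives $f_\chi(x) = -(L'/L)(1/2,\chi)\log x + O(1)$. For $P(x)$, the $k\ge 3$ terms contribute $O(\log x)$ trivially; the $k=2$ term, using $\chi^2(p)=1$ for $p\nmid q$ (valid since $\chi$ is real), reduces to
\[
\log x\sum_{p\le\sqrt{x}}\frac{\log p}{p} - 2\sum_{p\le\sqrt{x}}\frac{(\log p)^2}{p} + O(1).
\]
Mertens' theorem \eqref{EQ_201} and the partial-summation refinement $\sum_{p\le y}(\log p)^2/p = (\log y)^2/2 + O(\log y)$ (which follows from the PNT \eqref{EQ_202}) then give $P(x) = (\log x)^2/4 + O(\log x)$. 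Assembling, $S(x) = -(\sqrt{x}/4)(\log x)^2(1+O(1/\log x))$, which is \eqref{EQ_408}.

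The main obstacle is pinning down the leading constant $1/4$ in $P(x)$. The dominant contribution to $S(x)$ comes not from $L$-function data at $s=1/2$ (the $-(L'/L)(1/2,\chi)\log x$ term from $f_\chi$ yields only an $O(\sqrt{x}\log x)$ piece) but from squares of primes in $P(x)$. This arithmetic input is precisely why \eqref{EQ_408} is equivalent to, rather than strictly stronger than, the GRH, in contrast with Theorems~\ref{thm_2} and~\ref{thm_5}.
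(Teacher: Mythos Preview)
Your proof is correct and follows essentially the same approach as the paper: both directions rest on decomposing the full prime-power sum $f_\chi$ into the prime piece $f_1=S/\sqrt{x}$ and the higher-power correction $P=f_2+f_3$, invoking Proposition~\ref{prop_1} together with the analyticity of the higher-power contribution in $\Re(s)>1/2$ for the first direction, and computing $P(x)=\tfrac14(\log x)^2+O(\log x)$ via Mertens for the converse. The only cosmetic differences are that you package the correction as the Dirichlet series $\Phi(s)$ rather than as the Mellin transforms of $f_2,f_3$, and that the estimate $\sum_{p\le y}(\log p)^2/p=\tfrac12(\log y)^2+O(\log y)$ already follows from Mertens~\eqref{EQ_201} by partial summation, so no appeal to the PNT is needed.
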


\begin{proof} Let $f_\chi(x)$ be the function in \eqref{EQ_403}. 
If we define 
\[
f_1(x):= \sum_{p \leq x} \frac{\chi(p)\log p}{\sqrt{p}} \log\frac{x}{p}, 
\]
\[
f_2(x) :=  \sum_{p \leq \sqrt{x}} \frac{\chi(p)^2\log p}{p} \log\frac{x}{p^2}, 
\qquad 
f_3(x) :=  \sum_{k \geq 3}\sum_{p \leq x^{1/k}} \frac{\chi(p)^k\log p}{p^{k/2}}\log\frac{x}{p^k}, \\
\]
then $f_\chi(x)=f_1(x)+f_2(x)+f_3(x)$. For $f_2(x)$, we have 
\[
\aligned 
f_2(x)
& = \log x \sum_{p \leq \sqrt{x}} \frac{\log p}{p} 
-2\sum_{p \leq \sqrt{x}} \frac{(\log p)^2}{p} +O(\log x)\\
& = \frac{1}{2}(\log x)^2 + O(\log x)
-2\sum_{p \leq \sqrt{x}} \frac{(\log p)^2}{p} 
\endaligned 
\]
by using \eqref{EQ_201}. 
For the third term on the right-hand side,  
\[
\aligned 
\sum_{p \leq x} \frac{(\log p)^2}{p} 
& = \log x \sum_{p \leq x}\frac{\log p}{p} 
- \int_{1}^{x} \left( \sum_{p \leq t} \frac{\log p}{p} \right)\frac{1}{t} \, dt \\
& = (\log x)^2 +O(\log x) - \int_{1}^{x} (\log t+O(1)) \frac{1}{t} \, dt \\
& = \frac{1}{2}(\log x)^2+O(\log x)
\endaligned 
\]
by partial summation and \eqref{EQ_201}. Therefore, 
\begin{equation} \label{EQ_409}
f_2(x)
= \frac{1}{4}(\log x)^2+O(\log x). 
\end{equation} 
On the other hand, 
\begin{equation} \label{EQ_410}
f_3(x)
= O(\log x)
\end{equation} 
by the convergence of the sum. As a consequence, 
if we rewrite \eqref{EQ_404} as
\begin{equation} \label{EQ_411}
\aligned 
-\int_{1}^{\infty} f_1(x) \, x^{-s+1/2} \frac{dx}{x} 
& = \frac{1}{(s-1/2)^2}\frac{L'}{L}(s) \\
& \quad + \int_{1}^{\infty} f_2(x) \, x^{-s+1/2} \frac{dx}{x} 
+ \int_{1}^{\infty} f_3(x) \, x^{-s+1/2} \frac{dx}{x}, 
\endaligned 
\end{equation} 
the second and third terms on the right-hand side 
converge absolutely and uniformly on any compact subset 
in $\Re(s)>1/2$, and hence define analytic functions there. 

Suppose that $f_1(x)$ has a constant sign for all sufficiently large $x > 1$. 
Then, the assumption $L(\sigma,\chi)\not=0$ for $\sigma>\beta~(\geq 1/2)$ 
implies that the abscissa of convergence $\sigma_c$ for the integral on the left-hand side of 
\eqref{EQ_411} satisfies $\sigma_c \leq \beta$ by Proposition~\ref{prop_1}. 
Therefore, the integral on the left-hand side of 
\eqref{EQ_411} converges uniformly for any compact subset of $\Re(s)>\sigma_c$. 
This means that the right-hand side of \eqref{EQ_411} is analytic in $\Re(s) > \beta~(\geq 1/2)$. 
This forces $L(s, \chi) \ne 0$ in $\Re(s) > \beta~(\geq 1/2)$. 

Conversely, suppose that the GRH for $L(s)$ holds. 
Then, $f_\chi(x)=O(\log x)$ by \eqref{EQ_405}. 
On the other hand, 
$f_\chi(x)=f_1(z) + (1/4)(\log x)^2 + O(\log x)$
by \eqref{EQ_409} and \eqref{EQ_410}. 
Combining these two, 
\[
f_1(x) = -\frac{1}{4}(\log x)^2\left(1+O\left(\frac{1}{\log x}\right)\right). 
\]
This immediately implies \eqref{EQ_408}. 
\end{proof}

\subsection{Proof of Theorem \ref{thm_5}}

It suffices to show  \eqref{EQ_120} with 
$x$ replaced by $x/e^2$. 
Using the decomposition 
\[
\aligned 
\sum_{n \leq x}\frac{\Lambda(n)}{\sqrt{n}}\log\frac{x}{ne^2} 
& = \sum_{p \leq x} \frac{\log p}{\sqrt{p}} \left(\log\frac{x}{p}-2\right) \\
& \quad 
+ \sum_{p \leq \sqrt{x}} \frac{\log p}{p} \left(\log\frac{x}{p^2}-2\right) \\
& \quad 
+ \sum_{k \geq 3}\sum_{p \leq x^{1/k}} \frac{\log p}{p^{k/2}}\left(\log\frac{x}{p^k}-2\right) 
\endaligned 
\]
and \eqref{EQ_201}, 
it can be obtained by an argument similar to that of Theorem $\ref{thm_4}$. 
\hfill $\Box$

\section{Proof of Theorems \ref{thm_6} and \ref{thm_7}} \label{section_5}

\subsection{Proof of Theorem \ref{thm_6}} 

Noting the orthogonality of characters
\[
\sum_{\chi\,{\rm mod}\,q} \chi(n)
= 
\begin{cases}
~\varphi(q) & n \equiv 1\,{\rm mod}\, q, \\
~0 & n \not\equiv 1\,{\rm mod}\, q,
\end{cases}
\]
we take the sum of \eqref{EQ_403} over all  $\chi$ modulo $q$. Then, 
\begin{equation} \label{EQ_501}
\sum_{\chi\,{\rm mod}\, q} f_\chi(x) 
=
\varphi(q)\sum_{{n \leq x}\atop{n \equiv 1 \,{\rm mod}\, q}} \frac{\Lambda(n)}{\sqrt{n}}\log\frac{x}{n}.
\end{equation}
The Dirichlet theorem on arithmetic progressions \cite[Corollary 11.17]{MV07} 
tells us that 
the leading term in the asymptotic formulas for both \eqref{EQ_501} 
and
\[
2\,\sum_{\chi\,{\rm mod}\, q} \sum_{n \leq x} \frac{\Lambda(n)\chi(n)}{\sqrt{n}}
=
2\,\varphi(q)\sum_{{n \leq x}\atop{n \equiv 1 \,{\rm mod}\, q}} \frac{\Lambda(n)}{\sqrt{n}}
\]
as $x \to \infty$ is $4\sqrt{x}$. Thus, leading terms cancel out in 
\begin{equation} \label{EQ_502}
\aligned 
f_q(x) 
&:= \frac{1}{\varphi(q)} \sum_{\chi\,{\rm mod}\, q} f_\chi(x) 
- \frac{4\sqrt{x}}{\varphi(q)} \\ 
& = \sum_{{n \leq x}\atop{n \equiv 1 \,{\rm mod}\, q}} \frac{\Lambda(n)}{\sqrt{n}}\log\frac{x}{n}
- \frac{4\sqrt{x}}{\varphi(q)}
\endaligned 
\end{equation}
and
\begin{equation} \label{EQ_503}
\aligned 
F_q(x) 
&:= \frac{1}{\varphi(q)} \sum_{\chi\,{\rm mod}\, q} f_\chi(x) 
- \frac{2}{\varphi(q)} \sum_{\chi\,{\rm mod}\, q} 
\sum_{n \leq x} \frac{\Lambda(n)\chi(n)}{\sqrt{n}} \\
&= \frac{1}{\varphi(q)} \sum_{\chi\,{\rm mod}\, q} \sum_{n \leq x} 
\frac{\Lambda(n)\chi(n)}{\sqrt{n}}\log\frac{x}{ne^2} \\
& = \sum_{{n \leq x}\atop{n \equiv 1 \,{\rm mod}\, q}} \frac{\Lambda(n)}{\sqrt{n}}\log\frac{x}{ne^2}. 
\endaligned 
\end{equation}
Taking the sum $\varphi(q)^{-1}\sum_{\chi\,{\rm mod}\, q}$ in \eqref{EQ_404}, 
\begin{equation} \label{EQ_504}
\aligned  
\int_{1}^{\infty}  & 
(-f_q(x)) x^{-s+1/2} \, \frac{dx}{x} 
= \frac{1}{\varphi(q)}
\left(
\frac{1}{(s-1/2)^2}  \sum_{\chi\,{\rm mod}\, q} \frac{L'}{L}(s,\chi) +\frac{4}{s-1}
\right)
\endaligned 
\end{equation} 
for $\Re(s)>1$. 
On the other hand, 
by \eqref{EQ_306}, \eqref{EQ_404}, and the way of their proofs, we obtain 
\begin{equation} \label{EQ_505}
\int_{1}^{\infty}  
\left(-\sum_{n \leq x} \frac{\Lambda(n)\chi(n)}{\sqrt{n}}\log\frac{x}{ne^2}
\right) x^{-s+1/2} \, \frac{dx}{x} 
= 
\frac{2(1-s)}{(s-1/2)^2} \frac{L'}{L}(s,\chi) 
\end{equation}
for $\Re(s)>1$. 
Taking the sum $\varphi(q)^{-1}\sum_{\chi\,{\rm mod}\, q}$ in \eqref{EQ_505}, 
\begin{equation} \label{EQ_506}
\int_{1}^{\infty}  (-F_q(x)) x^{-s+1/2} \, \frac{dx}{x} 
= \frac{2(1-s)}{(s-1/2)^2} \frac{1}{\varphi(q)} \sum_{\chi\,{\rm mod}\, q}
\frac{L'}{L}(s,\chi).
\end{equation}
Then, following an argument similar to the proof of Theorems \ref{thm_2} and \ref{thm_8}, 
and using \eqref{EQ_504} and \eqref{EQ_506}, 
we obtain the first claim of Theorem \ref{thm_6}. 

We now prove the second and third claims of Theorem \ref{thm_6}. 
Suppose that $L(1/2,\chi)\not=0$ for any Dirichlet character $\chi$ modulo $q$. 
In an argument similar to the proof of \eqref{EQ_308}, 
in the case $\chi=\chi_0$, 
we include the zeros of $\prod_{p|q}(1-p^{-s})$ 
in both sums for $\rho$ on the right-hand side of \eqref{EQ_308};  
in the case $\chi\neq \chi_0$, 
we use \eqref{EQ_405} and \eqref{EQ_213} instead of 
\eqref{EQ_301} and \eqref{EQ_206}, respectively, 
and include the zeros of $\prod_{p|q}(1-\chi^\ast(p)p^{-s})$ 
in both sums for $\rho$ on the right-hand side, 
to obtain
\begin{equation} \label{EQ_507}
\aligned 
\sum_{n \leq x}\frac{\Lambda(n)\chi(n)}{\sqrt{n}}\log\frac{x}{ne^2}  
& = 
-\frac{L'}{L}\left(\frac{1}{2},\chi\right) \log x 
+ 2\sum_{\rho_\chi} \frac{x^{\rho_\chi-1/2}}{\rho_\chi-1/2} \\
& \quad 
- \sum_{\rho_\chi} \frac{x^{\rho_\chi-1/2}}{(\rho_\chi-1/2)^2} 
- \left(\frac{L'}{L}\right)^\prime\left(\frac{1}{2},\chi\right)
+ 2\frac{L'}{L}\left(\frac{1}{2},\chi\right) \\
& \quad 
- \sum_{k=0}^{\infty} \frac{x^{-2k-\kappa^\ast-1/2}}{(2k+\kappa^\ast+1/2)^2} 
- 2\sum_{k=0}^{\infty} \frac{x^{-2k-\kappa^\ast-1/2}}{2k+\kappa^\ast+1/2} \\
& \quad 
+O\left(\frac{\log x}{\sqrt{x}}\right). 
\endaligned 
\end{equation}
Here, the two sums $\sum_{\rho_\chi}$ on the right-hand side 
are taken over all nontrivial zeros of $L(s,\chi^\ast)$ 
and all zeros of the finite product $L(s,\chi)/L(s,\chi^\ast)=\prod_{p\mid q}(1-\chi^\ast(p)p^{-s})$, 
$\kappa^\ast=\kappa(\chi^\ast)$, and  
the last term arises from the difference between $\sideset{}{'}\sum_{n \leq x}$ and 
$\sum_{n \leq x}$. 
Taking the sum $\sum_{\chi\,{\rm mod}\, q}$ in \eqref{EQ_405} 
and using \eqref{EQ_301} for $\chi=\chi_0$, we obtain 
\begin{equation} \label{EQ_508}
\aligned 
\varphi(q) f_q(x)
& =
- \log x \sum_{\chi\,{\rm mod}\, q} \frac{L'}{L}\left(\frac{1}{2},\chi\right)
-
\sum_{\chi\,{\rm mod}\, q} 
\sum_{\rho_{\chi^\ast}} \frac{x^{\rho_{\chi^\ast}-1/2}}{(\rho_{\chi^\ast}-1/2)^2} 
+O(1), 
\endaligned 
\end{equation}
where 
the sums $\sum_{\rho_{\chi^\ast}}$ on the right-hand side 
are taken over all nontrivial zeros of $L(s,\chi^\ast)$ 
and 
the implied constant depends only on $q$. 
Similarly, taking the sum $\sum_{\chi\,{\rm mod}\, q}$ in \eqref{EQ_507}, we obtain
\begin{equation} \label{EQ_509}
\aligned 
\varphi(q)F_q(x)
& =
- \log x  \sum_{\chi\,{\rm mod}\, q} \frac{L'}{L}\left(\frac{1}{2},\chi\right)
\\
& \quad 
+
\sum_{\chi\,{\rm mod}\, q}
\left[
 2\sum_{\rho_{\chi^\ast}} \frac{x^{\rho_{\chi^\ast}-1/2}}{\rho_{\chi^\ast}-1/2} 
- \sum_{\rho_{\chi^\ast}} \frac{x^{\rho_{\chi^\ast}-1/2}}{(\rho_{\chi^\ast}-1/2)^2} 
\right]
+O(1), 
\endaligned 
\end{equation}
where 
the sums $\sum_{\rho_\chi^\ast}$ and 
the implied constant have the same meaning as in \eqref{EQ_508}. 
In \eqref{EQ_509}, the contribution from the zeros of $L(s,\chi)/L(s,\chi^\ast)$ 
is included in the $O(1)$ term, and it is justified as follows.

For a Dirichlet character $\chi$ modulo $q$, 
we denote by $\mathcal{Z}_1(\chi)$ the set of 
all nontrivial zeros of $L(s,\chi^\ast)$ counted with multiplicity 
and by $\mathcal{Z}_2(\chi)$ the set of 
all zeros of $L(s,\chi)/L(s,\chi^\ast)$. 
Then, the second sum on the right-hand side of \eqref{EQ_213} for $s=1/2$ is written as
\[
\sum_{\rho_\chi} \frac{x^{\rho_\chi-1/2}}{\rho_\chi-1/2}
= \lim_{T \to \infty} 
\left( \sum_{{\rho_\chi \in \mathcal{Z}_1(\chi)}\atop{|\Im(\rho_\chi)|\leq T}} 
\frac{x^{\rho_\chi-1/2}}{\rho_\chi-1/2}
+\sum_{{\rho_\chi \in \mathcal{Z}_2(\chi)}\atop{|\Im(\rho_\chi)|\leq T}}  
\frac{x^{\rho_\chi-1/2}}{\rho_\chi-1/2} \right).
\]
The second sum inside the brackets on the right-hand side 
is $x^{-1/2}(1+o(1))$ with respect to $T$, 
because the zeros of $p$-factors of 
$L(s, \chi)/L(s, \chi^\ast)$ 
have the form $i(A + Bn)$ ($n \in \mathbb{Z}$) 
for some real numbers $A$ and $B\not=0$, 
and the series 
\[
\lim_{N \to \infty} \sum_{|n| \leq N} 
\frac{x^{iBn}}{i(A + Bn)-1/2}
\] 
is convergent and bounded, as in the case of the Fourier series of the sawtooth wave. 
Therefore, the contribution from the zeros of $L(s,\chi)/L(s,\chi^\ast)$  
on the right-hand side of \eqref{EQ_507}, which originates from \eqref{EQ_213}, is bounded. 
Moreover, 
the contributions from the zeros of $L(s,\chi)/L(s,\chi^\ast)$ arising from \eqref{EQ_301} and \eqref{EQ_405} are also bounded, 
since the sums over zeros in \eqref{EQ_301} and \eqref{EQ_405} converge absolutely.

Assuming that the GRH holds for $L(s,\chi)$ for all $\chi$ modulo $q$, 
the second sum on the right-hand side of \eqref{EQ_508} is bounded. 
Additionally, if we assume \eqref{EQ_125}, 
the last sum over $\chi$ mod $q$ on the right-hand side of \eqref{EQ_509} 
is $o(\log x)$. 
Hence, \eqref{EQ_123} and \eqref{EQ_124} hold. 

Conversely, 
assuming \eqref{EQ_123} (resp. \eqref{EQ_124}), 
it follows from \eqref{EQ_502} (resp. \eqref{EQ_503}) that 
$f_q(x) \ll \log x$ (resp. $F_q(x) \ll \log x$) as $x \to \infty$. 
Therefore, the integral on the left-hand side of \eqref{EQ_504} (resp. \eqref{EQ_506}) converges uniformly 
on any compact subset of the right half-plane $\Re(s) > 1/2$, and hence defines an analytic function there. 
Thus, the GRH for $L(s,\chi)$ for all $\chi$ modulo $q$ follows 
from an argument similar to that in the proof of Theorem~\ref{thm_8}. 
Moreover, assuming \eqref{EQ_124}, it follows from \eqref{EQ_503} that
\[
\varphi(q) F_q(x) 
= 
- \log x  \sum_{\chi\,{\rm mod}\, q} \frac{L'}{L}\left(\frac{1}{2},\chi\right) + o(\log x)
\quad \text{as } x \to \infty.
\]
Comparing this with \eqref{EQ_509}, we find that the sum over $\chi$ mod $q$ on the right-hand side of \eqref{EQ_509} must be $o(\log x)$. However, the latter part of this sum is absolutely convergent and bounded, and thus \eqref{EQ_125} follows.

\comment{
In the right-hand side of \eqref{EQ_504} (resp.\ \eqref{EQ_506}),  
the residues of $(L'/L)(s,\chi)$ at its poles are equal to the orders of the zeros of $L(s,\chi)$, and are therefore positive integers,  
except for the pole of $(L'/L)(s,\chi_0)$ at $s=1$, which is canceled in the right-hand side of \eqref{EQ_504} (resp.\ \eqref{EQ_506}).  
This implies that the poles of $(L'/L)(s,\chi)$ cannot cancel each other  in the right-hand side of \eqref{EQ_504} (resp.\ \eqref{EQ_506}).  
Hence, if the right-hand side of \eqref{EQ_504} (resp.\ \eqref{EQ_506}) is analytic in the region $\Re(s) > 1/2$,  
it follows that $L(s,\chi) \ne 0$ for all $\chi$ modulo $q$ in that region. 
Hence the GRH holds for $L(s,\chi)$ for all $\chi$ modulo $q$. 
}
\medskip

Finally, we prove the fourth and fifth claims of Theorem~\ref{thm_6}. 
By using \eqref{eq_0612_2} in place of \eqref{EQ_213} and \eqref{eq_0612_5} in place of \eqref{EQ_405}, 
and employing the resulting modified versions of \eqref{EQ_508} and \eqref{EQ_509}, 
one can show that both \eqref{eq_0612_6} and \eqref{eq_0612_7} are equivalent to the GRH for $L(s,\chi)$ 
for all Dirichlet characters $\chi$ modulo $q$, 
by an argument similar to that used in the proofs of  the second and third claims.  
Furthermore, by appropriately modifying \eqref{EQ_509} using \eqref{eq_0612_2} and \eqref{eq_0612_5}, 
one can also show that \eqref{eq_0612_7} implies \eqref{eq_0613_3} 
by an argument similar to that of the proof of the third claim. 
We omit the details, as there is no essential difference.
\hfill $\Box$

\subsection{Proof of Theorem \ref{thm_7}} 

First, we prove \eqref{EQ_126}. 
After multiplying $f_q(x)$ in \eqref{EQ_502} by $\varphi(q)$, 
summing over $3 \leq q \leq Q$,  
and then changing the order of summation, we obtain
\begin{equation} \label{EQ_510}
\aligned 
\sum_{3 \leq q \leq Q} 
\varphi(q) f_q(x) 
& = \sum_{3 \leq q \leq Q} 
\left( \varphi(q)\sum_{{n \leq x}\atop{n \equiv 1 \,{\rm mod}\, q}} 
\frac{\Lambda(n)}{\sqrt{n}}\log\frac{x}{n} -4 \sqrt{x}
\right)\\
& = \sum_{n \leq x} 
\frac{\Lambda(n)}{\sqrt{n}}\log\frac{x}{n} \sum_{{3 \leq q \leq Q}\atop{q \mid n-1}} \varphi(q)
- 4\sqrt{x}\,(Q-2), 
\endaligned 
\end{equation}
where we understand that the inner sum is zero if it is an empty sum. 
For any $n \leq x$, 
all the divisors of $n-1$ appear in the range $1 \leq q \leq Q$ 
by the assumption $Q \geq x$. 
For a prime number $p$, $2$ divides $p^k-1$ if and only if $p$ is odd. 
Therefore, using $\sum_{d|m}\varphi(d)=m$ on the right-hand side of \eqref{EQ_510},  
\[
\aligned 
\sum_{n \leq x} 
\frac{\Lambda(n)}{\sqrt{n}}\log\frac{x}{n} \sum_{{3 \leq q \leq Q}\atop{q \mid n-1}} \varphi(q)
& =
\sum_{{n \leq x}\atop{n \not\in 2^{\N}}} \frac{\Lambda(n)(n-3)}{\sqrt{n}}\log\frac{x}{n}
+ \log 2\sum_{2^k \leq x} \frac{(2^k-2)}{\sqrt{2^k}}\log\frac{x}{2^k} \\ 
& =
\sum_{n \leq x} \frac{\Lambda(n)(n-3)}{\sqrt{n}}\log\frac{x}{n}
+ o(\sqrt{x}\,).
\endaligned 
\]
Using \cite[(2.3)]{Su23} and the prime number theorem \eqref{EQ_202}
in \eqref{EQ_301}, 
\[
\sum_{n \leq x} \frac{\Lambda(n)}{\sqrt{n}}\log\frac{x}{n}
= O(\sqrt{x}). 
\]
Therefore, we obtain 
\[
\sum_{3 \leq q \leq Q} \varphi(q) f_q(x)
=
\sum_{n \leq x} \Lambda(n)\sqrt{n}\,\log\frac{x}{n} - 4\sqrt{x}\,Q
+ O(\sqrt{x}\,). 
\]
By applying partial summation to 
the first term on the right-hand side 
using the prime number theorem \eqref{EQ_202}, 
\[
\aligned 
\sum_{n \leq x} \Lambda(n) \cdot \sqrt{n}\,\log\frac{x}{n}
& = - \int_{1}^{x}  \sum_{n \leq u} \Lambda(n) 
\left(
-\frac{1}{\sqrt{u}} + \frac{1}{2\sqrt{u}}\log\frac{x}{u}
\right) \, du \\
& = \frac{4}{9}x\sqrt{x}\,(1+o(1)). 
\endaligned 
\]
Hence, we obtain \eqref{EQ_126}. 

Next, we prove \eqref{EQ_127}. 
After multiplying $F_q(x)$ in \eqref{EQ_503} by $\varphi(q)$, 
summing over $3 \leq q \leq Q$,  
and then changing the order of summation as in \eqref{EQ_510}, we obtain
\begin{equation} \label{EQ_511}
\sum_{3 \leq q \leq Q} \varphi(q) F_q(x)
= \sum_{n \leq x} 
\frac{\Lambda(n)}{\sqrt{n}}\log\frac{x}{ne^2} \sum_{{3 \leq q \leq Q}\atop{q \mid n-1}} \varphi(q). 
\end{equation}
Then, as before, 
\[
\sum_{3 \leq q \leq Q} \varphi(q) F_q(x)
 =
\sum_{n \leq x} \frac{\Lambda(n)(n-3)}{\sqrt{n}}\log\frac{x}{ne^2}
+ o(\sqrt{x}\,).
\]
Using \cite[(2.3)]{Su23} and the prime number theorem \eqref{EQ_202}
in \eqref{EQ_308}, 
\[
\sum_{n \leq x} \frac{\Lambda(n)}{\sqrt{n}}\log\frac{x}{ne^2}
= o(\sqrt{x}\,).
\]
Therefore, we obtain 
\[
\sum_{3 \leq q \leq Q} \varphi(q) F_q(x)
=
\sum_{n \leq x} \Lambda(n)\sqrt{n}\,\log\frac{x}{ne^2}
+ o(\sqrt{x}\,). 
\]
By applying partial summation to 
the first term on the right-hand side 
using the prime number theorem \eqref{EQ_202}, 
\[
\aligned 
\sum_{n \leq x} \Lambda(n) \cdot \sqrt{n}\,\log\frac{x}{ne^2}
& = -2\sqrt{x}\,\sum_{n \leq x} \Lambda(n) - \int_{1}^{x}  \sum_{n \leq u} \Lambda(n) 
\left(
-\frac{2}{\sqrt{u}} + \frac{1}{2\sqrt{u}}\log\frac{x}{u}
\right) \, du \\
& = -\frac{8}{9}x\sqrt{x}\,(1+o(1)). 
\endaligned 
\]
Hence, we obtain \eqref{EQ_127} by substituting $xe^2$ for $x$. \hfill 
$\Box$
\medskip

If we take the sum $\sum_{3 \leq q \leq Q}$ in \eqref{EQ_503}, we obtain 
\[
\sum_{3 \leq q \leq Q} 
F_q(x) 
=
\sum_{n \leq x} \frac{\Lambda(n)(\sigma_0(n-1)-\nu(n))}{\sqrt{n}}\log\frac{x}{ne^2},
\]
where $\sigma_0(n)$ denotes the number of divisors of $n$ 
and $\nu(n)=1$ if $n=2^k$ for some $k \in \Z_{>0}$, 
and $\nu(n)=2$ otherwise. 
In this case, the study of Titchmarsh's divisor problem 
$\sum_{n \leq x} \Lambda(n)\sigma_0(n-1)$ provides the leading term.
Using the asymptotic formula
\[
\aligned
\sum_{n \leq x} \Lambda(n)\sigma_0(n-1) 
& = 
\frac{\zeta(2)\zeta(3)}{\zeta(6)}x\log x \\
& \quad + \frac{\zeta(2)\zeta(3)}{\zeta(6)}\left[ 2 \left(C_0-\sum_p \frac{\log p}{p^2-p+1}\right) 
 -1\right] x + O_A\left( \frac{x}{(\log x)^A} \right)
\endaligned 
\]
obtained  by Bombieri, Friedlander, and Iwaniec \cite{BFI86} and Fouvry \cite{Fo85}, we find that
\[
\sum_{n \leq x} \frac{\Lambda(n)\sigma_0(n-1)}{\sqrt{n}}\log\frac{x}{ne^2} 
= -8\,\frac{\zeta(2)\zeta(3)}{\zeta(6)} \sqrt{x}\, (1+o(1)).
\]
This also shows that \eqref{EQ_122} has a constant sign on average, 
however, the sum used in the proof of Theorem \ref{thm_7} is probably easier to handle.

%

%
\bigskip 

\noindent
Masatoshi Suzuki,\\[5pt]
Department of Mathematics, \\
School of Science, \\
Institute of Science Tokyo \\
2-12-1 Ookayama, Meguro-ku, \\
Tokyo 152-8551, Japan  \\[2pt]
Email: {\tt msuzuki@math.sci.isct.ac.jp}


\begin{thebibliography}{99}
%
\bibitem{AK22}
M. Aoki, S. Koyama
\newblock{Chebyshev's bias against splitting and principal primes in global fields}, 
\newblock{\it J. Number Theory} 
\newblock{{\bf 245} (2023), 233--262}. 
%
\bibitem{Aka17}
H. Akatsuka,  
\newblock{The Euler product for the Riemann zeta-function in the critical strip}, 
\newblock{\it Kodai Math. J.} 
\newblock{{\bf 40} (2017), no. 1, 79--101}. 
%
\bibitem{Aka24}
H. Akatsuka,  
\newblock{Maximal order for divisor functions and zeros of the Riemann zeta-function}, 
\newblock{arXiv:2411.19259.} 
\url{https://arxiv.org/abs/2411.19259}. 
%
\bibitem{BFI86}
E. Bombieri, J. B. Friedlander, H. Iwaniec, 
\newblock{Primes in arithmetic progressions to large moduli}, 
\newblock{\it Acta Math.} 
\newblock{{\bf 156} (1986), no. 3-4, 203--251}. 
%
\bibitem{Con05}
K. Conrad, 
\newblock{Partial Euler products on the critical line}, 
\newblock{\it Canad. J. Math.} 
\newblock{{\bf 57} (2005), no. 2, 267--297}. 
%
\comment{
\bibitem{Er81}
A. Erd\'{e}lyi, W. Magnus, F. Oberhettinger, F. G. Tricomi, 
\newblock{Higher transcendental functions. {V}ol. {I}}.  
\newblock{Based on notes left by Harry Bateman. 
With a preface by Mina Rees. With a foreword by E. C. Watson. Reprint of the 1953 original}, 
\newblock{\it Robert E. Krieger Publishing Co., Inc., Melbourne, FL}, 
\newblock{1981}
}
%
\bibitem{Fo85}
\'{E}. Fouvry, 
\newblock{Sur le probl\`eme des diviseurs de {T}itchmarsh}, 
\newblock{\it J. Reine Angew. Math.} 
\newblock{{\bf 357 } (1985), 51--76}. 
%
\bibitem{Fu88}
A. Fujii, 
\newblock{Some generalizations of Chebyshev's conjecture}, 
\newblock{\it Proc. Japan Acad. Ser. A Math. Sci.} 
\newblock{{\bf 64} (1988), no. 7, 260--263}. 
%
\comment{
\bibitem{Gu48}
A. P. Guinand, 
\newblock{A summation formula in the theory of prime numbers}, 
\newblock{\it Proc. London Math. Soc. (2)}, 
\newblock{{\bf 50} (1948), 107--119}. 
}
%
\bibitem{HL16}
G. H. Hardy, J. E. Littlewood, 
\newblock{Contributions to the theory of the Riemann zeta-function and the theory of the distribution of primes}, 
\newblock{\it Acta Math. } 
\newblock{{\bf 41} (1916), no. 1, 119--196}. 
%
\bibitem{In90}
A. E. Ingham, 
\newblock{The distribution of prime numbers, 
Reprint of the 1932 original, 
With a foreword by R. C. Vaughan, 
Cambridge Mathematical Library}, 
\newblock{\it Cambridge University Press, Cambridge} 
\newblock{1990}. 
%
\bibitem{Jo23}
D. R. Johnston,  
\newblock{On the average value of $\pi(t)-{\rm li}(t)$}, 
\newblock{\it Canad. Math. Bull.}, 
\newblock{{\bf 66} (2023), no. 1, 185--195}. 
%
\bibitem{KnTu69}
S. Knapowski, P. Tur\'{a}n, 
\newblock{\"{U}ber einige {F}ragen der vergleichenden {P}rimzahltheorie (German)}, 
\newblock{\it Number Theory and Analysis (Papers in Honor of Edmund Landau)}, 
\newblock{157--171}, 
\newblock{\it Plenum, New York}, 
\newblock{1969}. 
%
\bibitem{KrLa77}
M. G. Kre\u{\i}n, H. Langer, 
\newblock{\"{U}ber einige {F}ortsetzungsprobleme, die eng mit der {T}heorie
              hermitescher {O}peratoren im {R}aume {$\Pi \sb{\kappa }$}
              zusammenh\"{a}ngen. {I}. {E}inige {F}unktionenklassen und ihre
              {D}arstellungen}, 
\newblock{\it Math. Nachr.} 
\newblock{{\bf 77} (1977), 187--236}. 
%
\bibitem{La18}
E. Landau, 
\newblock{\"{U}ber einige \"{a}ltere {V}ermutungen und {B}ehauptungen in der
              {P}rimzahltheorie}, 
\newblock{\it Math. Z.} 
\newblock{{\bf 1} (1918), 1--24, 213--219}. 
%
\bibitem{Mo80}
H. L. Montgomery, 
\newblock{The zeta function and prime numbers}, 
\newblock{\it Proceedings of the Queen's Number Theory Conference, 1979 (Kingston, Ont., 1979)},  
\newblock{pp. 1--31, Queen's Papers in Pure and Appl. Math., 54},  
\newblock{\it Queen's Univ., Kingston, ON}, 
\newblock{1980}.   
%
\bibitem{MV07}
H. L. Montgomery, R. C. Vaughan, 
\newblock{Multiplicative number theory. I. Classical theory, 
Cambridge Studies in Advanced Mathematics, 97
}, 
\newblock{\it Cambridge University Press, Cambridge}, 
\newblock{2007}. 
%
\bibitem{Pi91}
J. Pintz, 
\newblock{On an assertion of Riemann concerning the distribution of prime numbers}, 
\newblock{\it Acta Math. Hungar.} 
\newblock{{\bf 8} (1991), no. 3-4, 383--387}. 
%
\bibitem{Pl16}
D. Platt, 
\newblock{Numerical computations concerning the GRH}, 
\newblock{\it Math. Comp.} 
\newblock{{\bf 85} (2016), no. 302, 3009--3027}. 
%
\bibitem{PT18}
D. Platt, T. Trudgian, 
\newblock{Fujii's development on Chebyshev's conjecture}, 
\newblock{\it Int. J. Number Theory} 
\newblock{{\bf 15} (2019), no. 3, 639--644}. 
%
\bibitem{LS94}
M. Rubinstein, P. Sarnak, 
\newblock{Chebyshev's bias}, 
\newblock{\it Experiment. Math.} 
\newblock{{\bf 3} (1994), no. 3, 173--197}. 
%
\bibitem{So09} 
K. Soundararajan,
\newblock{Moments of the Riemann zeta function}, 
\newblock{\it Ann. of Math. (2)} 
\newblock{{\bf 170} (2009), no. 2, 981--993}. 
%
\bibitem{Su23} 
M. Suzuki,
\newblock{Aspects of the screw function corresponding to the Riemann zeta function}, 
\newblock{\it J. Lond. Math. Soc.} 
\newblock{{\bf 108} (2023), no.4, 1448-1487}. 
%
\bibitem{Su22} 
M. Suzuki,
\newblock{Screw functions of Dirichlet series in the extended Selberg class}, 
\newblock{to appear in the International Journal of Number Theory}.
%
\bibitem{Tit86}
E. C. Titchmarsh,
\newblock{The theory of the Riemann zeta-function, Second edition, 
Edited and with a preface by D. R. Heath-Brown
}, 
\newblock{\it The Clarendon Press, Oxford University Press, New York}, 
\newblock{1986}. 
%
\bibitem{Wa04} 
M. Watkins, 
\newblock{Real zeros of real odd Dirichlet $L$-functions}, 
\newblock{\it Math. Comp.} 
\newblock{{\bf 73} (2004), no. 245, 415--423}. 
%
\bibitem{Wi41}
V. D. Widder,
\newblock{The Laplace Transform}, 
\newblock{Princeton Mathematical Series, vol. 6}, 
\newblock{\it Princeton University Press, Princeton, NJ},  
\newblock{1941}. 
%
\bibitem{Y91} 
C. Y. Yildirim, 
\newblock{The pair correlation of zeros of Dirichlet $L$-functions and primes in arithmetic progressions}, 
\newblock{\it Manuscripta Math.} 
\newblock{{\bf 72} (1991), no. 3, 325--334}. 
%
\end{thebibliography}
\end{document}